\documentclass[11pt,a4paper]{article}

\usepackage{amsmath,amssymb,amsthm}
\usepackage[utf8]{inputenc}
\usepackage{mathrsfs}
\usepackage[margin=1in]{geometry}
\usepackage{hyperref}
\usepackage{enumitem}

\theoremstyle{plain}
\newtheorem{theorem}{Theorem}[section]
\newtheorem{proposition}[theorem]{Proposition}
\newtheorem{lemma}[theorem]{Lemma}
\newtheorem{corollary}[theorem]{Corollary}
\newtheorem{conjecture}[theorem]{Conjecture}

\theoremstyle{definition}
\newtheorem{definition}[theorem]{Definition}

\theoremstyle{remark}
\newtheorem{remark}[theorem]{Remark}

\newcommand{\E}{\mathbb{E}}
\newcommand{\R}{\mathbb{R}}
\newcommand{\Z}{\mathbb{Z}}
\newcommand{\C}{\mathbb{C}}
\newcommand{\T}{\mathbb{T}}

\newcommand{\Var}{\mathrm{Var}}

\title{Bakry--\'Emery Curvature of the Fractional Laplacian\\
via Fractional Brownian Covariance}

\author{Ramiro Fontes\\
Quijotic Research\\
\texttt{ramirofontes@gmail.com}}

\date{February 2026}

\begin{document}

\maketitle

\begin{abstract}
We establish the first positive Bakry--\'Emery curvature bound for
a fractional Laplacian---with and without drift---on any domain,
resolving a case of the open problem of extending the $\Gamma_2$-calculus
to non-local operators (Garofalo 2019, Spener--Weber--Zacher 2020).

Our starting point is the identification
$\Psi_\gamma(\xi,\eta) = R_{\gamma/2}(|\xi|,|\eta|)$
for same-sign frequencies $\xi\eta \geq 0$:
the Fourier-space carr\'e du champ of the $\gamma$-stable generator
$L_\gamma = -(-\Delta)^{\gamma/2}$ coincides with the covariance
kernel of fractional Brownian motion with Hurst parameter $H = \gamma/2$.
On $\T = \R/(2\pi\Z)$, this reduces the Bakry--\'Emery curvature
to a generalized eigenvalue problem for fBM covariance matrices.
At $\gamma = 1$ (Cauchy process, $H = 1/2$), these eigenvalues are the
odd integers $\{1,3,\ldots,2N{-}1\}$, giving $\mathrm{CD}(1,\infty)$
for all $N$ and hence on the full domain.
On real trigonometric polynomials, the cross-sign kernel
$\Psi_\gamma(n,-m)$ vanishes if and only if $\gamma = 1$,
making the Cauchy process the unique stability index with $\kappa \geq 1$.

For the L\'evy--Fokker--Planck operator
$L = -(-\Delta)^{1/2} - V'(x)\,\partial_x$ with confining potential
$V(x) = -\omega^2\cos x$, we prove that the drift correction acts
as a \emph{scalar shift} of the curvature spectrum: the eigenvalues
become $\{2k{-}1 + \frac{\omega^2}{2}\cos x\}_{k=1}^N$,
giving the global bound $\mathrm{CD}(1-\omega^2/2,\,\infty)$
for $\omega^2 < 2$, independent of $N$.
This yields a Poincar\'e inequality and gradient estimate for an
operator that is \emph{not} Fourier-diagonal, and whose spectrum
is not explicitly known.
These results complement Spener--Weber--Zacher's negative result
that $(-\Delta)^{\gamma/2}$ on $\R^d$ fails
$\mathrm{CD}(\kappa,N)$ for all finite~$N$.
\end{abstract}

\medskip
\noindent\textbf{MSC 2020:} 60G52, 60G22, 35S10, 47D07, 42A16.\\
\textbf{Keywords:} Bakry--\'Emery curvature; fractional Laplacian;
carr\'e du champ; fractional Brownian motion; Hadamard product;
stable semigroup; cross-sign vanishing; L\'evy--Fokker--Planck;
Schoenberg kernel.

\section{Introduction}\label{sec:intro}

\subsection{Motivation}

The Bakry--\'Emery $\Gamma_2$-criterion is the principal tool for
establishing functional inequalities for Markov
semigroups~\cite{BakryGentilLedoux2014}. For diffusion generators
$L = \Delta + \nabla V \cdot \nabla$ on a Riemannian manifold,
the condition $\Gamma_2(f,f) \geq \kappa\,\Gamma(f,f)$ is equivalent
to a Ricci curvature lower bound, and the constant $\kappa$ controls
convergence to equilibrium through
\[
  \Gamma_2 \geq \kappa\,\Gamma
  \;\Longrightarrow\;
  \text{log-Sobolev with } C = 2/\kappa
  \;\Longrightarrow\;
  \text{Poincar\'e with } \lambda_1 \geq \kappa.
\]
For non-local generators such as the fractional Laplacian
$L_\gamma = -(-\Delta)^{\gamma/2}$, the theory is substantially
more delicate. Spener, Weber, and
Zacher~\cite{SpenerWeberZacher2020} proved that the fractional
Laplacian on $\R^d$ fails to satisfy the curvature-dimension inequality
$\mathrm{CD}(\kappa,N)$---that is,
$\Gamma_2(f,f) \geq \kappa\,\Gamma(f,f)
+ \tfrac{1}{N}(Lf)^2$---for all $\kappa \in \R$ and all finite $N > 0$.
(Throughout this paper we use the condition
$\mathrm{CD}(\kappa,\infty)$, meaning
$\Gamma_2 \geq \kappa\,\Gamma$ with no dimension term.)
This was identified as a key open problem in extending the $\Gamma$-calculus
to non-local operators~\cite{Garofalo2019}.

In this paper, we work on the compact torus $\T = \R/(2\pi\Z)$, where
the spectral gap $\lambda_1 = 1$ provides the essential compactness that
the $\R^d$ setting lacks.  Our starting point is the observation that
the Fourier-space carr\'e du champ of the $\gamma$-stable generator
coincides with the covariance kernel of fractional Brownian motion
with Hurst parameter $H = \gamma/2$.
This identification---a connection between the Dirichlet form of a
jump process and the covariance of a Gaussian process---allows us to
express the Bakry--\'Emery curvature theory on same-sign frequencies
in terms of spectral properties of fBM covariance matrices.

\subsection{The stable--fBM correspondence}

The Fourier-space kernel of the carr\'e du champ
$\Psi_\gamma(\xi,\eta) :=
\frac{1}{2}(|\xi|^\gamma + |\eta|^\gamma - |\xi-\eta|^\gamma)$
is a standard object in the theory of jump-type Dirichlet
forms~\cite{BoettcherSchillingWang2013}.
The fBM covariance $R_H(s,t) = \frac{1}{2}(|s|^{2H} + |t|^{2H}
- |s-t|^{2H})$ is classical since
Mandelbrot--Van~Ness~\cite{Mishura2008}.
The kernel $\frac{1}{2}(|\xi|^\gamma + |\eta|^\gamma
- |\xi-\eta|^\gamma)$ also appears in Schoenberg's
theory of conditionally negative definite
functions~\cite{Schoenberg1938}.

These three appearances have not, to our knowledge, been connected
explicitly.  We prove (Theorem~\ref{thm:duality},
Section~\ref{sec:duality}):
\begin{equation}\label{eq:duality-intro}
  \Psi_\gamma(\xi,\eta) = R_{\gamma/2}(|\xi|,|\eta|),
  \qquad \gamma \in (0,2),\quad \xi\eta \geq 0,
\end{equation}
identifying the carr\'e du champ of the stable generator on
same-sign frequencies with the covariance of fBM.  The proof
requires only the L\'evy--Khintchine formula.
The identity was originally discovered within an operator
factorization framework for stochastic
calculus~\cite{Fontes2026a}; here we give a self-contained
treatment and develop its consequences for curvature theory.

\begin{remark}[On the depth of the identification]
\label{rem:identification}
The identity~\eqref{eq:duality-intro} (on same-sign frequencies)
can be verified
in one line once both sides are written out. Its value
is not computational but \emph{structural}: it connects
the Dirichlet form of a pure-jump process to the
covariance of a Gaussian process, placing the entire
Bakry--\'Emery theory into the well-developed spectral
theory of fBM covariance operators. Concretely:
\begin{enumerate}[label=(\alph*)]
\item The positive semi-definiteness of $\Psi_\gamma$
  follows immediately from the positive-definiteness
  of $R_H$ (Schoenberg's theorem becomes a
  corollary of fBM existence).
\item The Hadamard square identity (Theorem~A)
  translates $\Gamma_2 \geq 0$ into the Schur product
  theorem applied to fBM covariance matrices.
\item The curvature computation reduces to a
  \emph{generalized eigenvalue problem} for fBM
  matrices, making numerical computation trivial
  and structural analysis (e.g., the odd integer
  spectrum at $\gamma = 1$) accessible.
\item The cross-sign vanishing (Theorem~D) is
  equivalent to the linearity of the fBM path
  variance $|s|^{2H} + |t|^{2H} - |s+t|^{2H} = 0$
  at $H = 1/2$.
\end{enumerate}
None of these consequences are obvious from the
Dirichlet form or L\'evy--Khintchine perspectives alone.
\end{remark}

\subsection{Main results}

We work on $\T = \R/(2\pi\Z)$ with normalized Lebesgue measure
$\mu = dx/(2\pi)$. The fractional Laplacian acts on Fourier modes as
$L_\gamma(e^{inx}) = -|n|^\gamma\,e^{inx}$.

\medskip\noindent
\textbf{Theorem~A} (Hadamard square identity; \S\ref{sec:gamma2}).
\emph{The iterated carr\'e du champ kernel of $L_\gamma$ is the
Hadamard square of the carr\'e du champ kernel:
\begin{equation}\label{eq:intro-A}
  \Phi_{\gamma,2}(\xi,\eta)
  = \bigl[\Psi_\gamma(\xi,\eta)\bigr]^2.
\end{equation}
On same-sign frequencies ($\xi\eta \geq 0$), this equals
$[R_{\gamma/2}(|\xi|,|\eta|)]^2$.}

\medskip\noindent
\textbf{Theorem~B} (Curvature as eigenvalue problem; \S\ref{sec:curv-pos}).
\emph{On positive-frequency polynomials $f = \sum_{n=1}^N a_n\,e^{inx}$,
the Bakry--\'Emery curvature is
\begin{equation}\label{eq:intro-B}
  \kappa(\gamma, N) = \lambda_{\min}\!\bigl(R_H^{\circ 2},\, R_H\bigr),
  \quad H = \gamma/2,
\end{equation}
where $R_H$ is the $N \times N$ fBM covariance matrix on $\{1,\ldots,N\}$
and $R_H^{\circ 2}$ is its entrywise square (Hadamard square;
see \S\ref{sec:notation}).}

\medskip\noindent
\textbf{Theorem~C} (Odd integer eigenvalues; \S\ref{sec:cauchy}).
\emph{At $\gamma = 1$, the matrix $R_{1/2}^{-1}\,R_{1/2}^{\circ 2}$ is
upper triangular with eigenvalues $\{1, 3, 5, \ldots, 2N-1\}$.
In particular, $\kappa(1,N) = 1$ for all $N \geq 1$.}

\medskip\noindent
\textbf{Theorem~D} (Cross-sign vanishing and uniqueness of $\gamma=1$; \S\ref{sec:cross-sign}).
\emph{The cross-sign kernel satisfies
\begin{equation}\label{eq:intro-D}
  \Psi_\gamma(n,-m) = \tfrac{1}{2}\bigl(|n|^\gamma + |m|^\gamma
  - |n+m|^\gamma\bigr)
  \begin{cases}
    = 0 & \text{if } \gamma = 1, \\
    > 0 & \text{if } \gamma < 1, \\
    < 0 & \text{if } \gamma > 1,
  \end{cases}
\end{equation}
for all $n,m > 0$. Consequently, $\gamma = 1$ is the unique stability
index for which the curvature
$\kappa_{\mathrm{real}}(\gamma) \geq 1$ on all real
trigonometric polynomials (where $\kappa_{\mathrm{real}}$ denotes the
Bakry--\'Emery curvature restricted to real-valued test functions).}

\medskip\noindent
\textbf{Theorem~E} (Global curvature under drift at $\gamma=1$; \S\ref{sec:potential}).
\emph{For $L = -(-\Delta)^{1/2} - V'(x)\,\partial_x$ on $\T$ with
$V(x) = -\omega^2\cos x$, the drift correction to $\Gamma_2$ satisfies
the scalar identity
\begin{equation}\label{eq:intro-drift-scalar}
  \Gamma_{L,2}(f,f)(x) - \Gamma_{1,2}(f,f)(x)
  = \frac{\omega^2}{2}\cos(x)\;\Gamma(f,f)(x)
\end{equation}
for all positive-frequency polynomials
$f \in \mathcal{T}_N^+$ (\S\ref{sec:prelim}) and all $x \in \T$.
The eigenvalues of the curvature operator become
$\kappa_k(x) = (2k{-}1) + \frac{\omega^2}{2}\cos x$, giving
\begin{equation}\label{eq:intro-E}
  \kappa(1,\omega^2) = 1 - \frac{\omega^2}{2},
\end{equation}
independent of $N$.  In particular,
$\mathrm{CD}(1-\omega^2/2,\,\infty)$ holds for $\omega^2 < 2$.}

\subsection{Context and prior work}

This paper provides, to our knowledge, the first positive
Bakry--\'Emery curvature result for the fractional Laplacian
on any domain---both without drift ($\mathrm{CD}(1,\infty)$
for the free Cauchy process, Theorem~C) and with drift
($\mathrm{CD}(1-\omega^2/2,\infty)$ for the
Cauchy--Fokker--Planck operator, Theorem~E).
The drift result is the paper's principal contribution:
the operator $L = -(-\Delta)^{1/2} - \omega^2\sin(x)\,\partial_x$
is not diagonalized by Fourier modes, and its spectrum
is not explicitly known.  The curvature bound yields
a Poincar\'e inequality and gradient estimate that cannot
be obtained by direct spectral methods.

Spener, Weber, and Zacher~\cite{SpenerWeberZacher2020} showed that
$(-\Delta)^{\gamma/2}$ on $\R^d$ fails the curvature-dimension inequality
$\mathrm{CD}(\kappa,N)$ for all $\kappa$ and all finite $N$. Their result
leaves open the case of compact domains. Our Theorem~C provides
$\mathrm{CD}(1,\infty)$ on the torus at $\gamma = 1$: the compactness
(spectral gap $\lambda_1 = 1$) is essential.

Gentil and Imbert~\cite{GentilImbert2008} prove exponential convergence
of $\Phi$-entropies for the L\'evy--Fokker--Planck equation with rate
$\gamma$ for $\gamma$-stable processes with Ornstein--Uhlenbeck drift,
bypassing the chain rule via entropy production. Their approach gives
existence of convergence but not explicit Bakry--\'Emery constants.
Our results are complementary: we work on the torus rather than $\R^d$
but obtain exact curvature formulas controlled by the fBM covariance.

Chafa\"i and Malrieu (unpublished, cited in~\cite{GentilImbert2008},
Remark~4) observe that $\Gamma_2 \geq 0$ for pure-jump generators
follows from a double-integral representation as a square. We go
much further: we compute the \emph{full sesquilinear kernel}, identify
it as the Hadamard square of the fBM covariance, and extract
quantitative curvature bounds.

\subsection{Organization}

Section~\ref{sec:prelim}: preliminaries on the fractional Laplacian,
carr\'e du champ, and fBM.
Section~\ref{sec:duality}: the stable--fBM correspondence (identity
$\Psi_\gamma = R_{\gamma/2}$ on same-sign frequencies) and the distinguished point $\gamma = 1$.
Section~\ref{sec:gamma2}: Hadamard square identity (Theorem~A).
Section~\ref{sec:curvature}: curvature on positive frequencies
(Theorems~B and~C) and the phase transition.
Section~\ref{sec:real}: cross-sign structure and real polynomials
(Theorem~D).
Section~\ref{sec:potential}: curvature with confining potential
(Theorem~E).
Section~\ref{sec:func-ineq}: functional inequality consequences.
Section~\ref{sec:discussion}: spectral invariants and open problems.

\section{Preliminaries}\label{sec:prelim}

\subsection{The fractional Laplacian on the torus}

Let $\T = \R/(2\pi\Z)$ with $\mu(dx) = dx/(2\pi)$. The fractional
Laplacian $L_\gamma = -(-\Delta)^{\gamma/2}$ with $\gamma \in (0,2)$
acts as $L_\gamma(e^{inx}) = -|n|^\gamma\,e^{inx}$ for $n \in \Z$.
The semigroup is $P_t^\gamma(e^{inx}) = e^{-t|n|^\gamma}\,e^{inx}$,
with spectral gap $\lambda_1(\gamma) = 1$.

The space of trigonometric polynomials
$\mathcal{T} = \mathrm{span}\{e^{inx} : n \in \Z\}$ is a core
for $L_\gamma$ on $L^2(\T,\mu)$: it is dense, invariant under $P_t^\gamma$,
and contained in the domain of every power of $L_\gamma$.
We write $\mathcal{T}_N^+ = \mathrm{span}\{e^{inx} : 1 \leq n \leq N\}$
for the positive-frequency polynomials of degree at most $N$,
and $\mathcal{T}_N^- = \mathrm{span}\{e^{inx} : -N \leq n \leq -1\}$
for the negative-frequency polynomials.
Our curvature computations are performed on finite-dimensional
subspaces of $\mathcal{T}$; the resulting constants are therefore
exact on these subspaces and give upper bounds for the curvature
on the full domain.

\begin{remark}[Role of compactness]\label{rem:compact}
Spener, Weber, and Zacher~\cite{SpenerWeberZacher2020} proved
$(-\Delta)^{\gamma/2}$ on $\R^d$ fails $\mathrm{CD}(\kappa,N)$
for all $\kappa$ and all finite $N$. The torus provides
compactness (spectral gap $\lambda_1 = 1$) and
translation-invariance (Fourier diagonalization).

Compactness is \emph{necessary} (the $\R^d$ result precludes
positive curvature without it), but it is not \emph{sufficient}:
\begin{enumerate}[label=(\roman*)]
\item Compactness alone gives $\Gamma_2 \geq 0$
  (as we show in Theorem~\ref{thm:gamma2-kernel},
  $\Gamma_2$ is a Hadamard square, hence positive semidefinite
  by the Schur product theorem).
  The quantitative lower bound $\kappa > 0$ requires the
  specific spectral structure of the fBM covariance.
\item The exact curvature $\kappa(1,N) = 1$ (Theorem~C) is
  a non-trivial spectral fact: it requires the odd integer
  eigenvalue structure of $R_{1/2}^{-1}R_{1/2}^{\circ 2}$,
  not merely positivity.
\item The phase transition at $\gamma = 1$ (Theorem~D) is a
  structural phenomenon controlled by the linearity of
  $|\cdot|^1$, independent of compactness.
\end{enumerate}
Whether analogous results hold on other compact domains
(e.g.\ spheres, compact Lie groups) remains open.
\end{remark}

The generator on $\R$ has the integral representation
\begin{equation}\label{eq:generator}
  L_\gamma f(x)
  = \int_{\R \setminus \{0\}}
  \bigl[f(x+z) - f(x) - f'(x)\,z\,\mathbf{1}_{|z| \leq 1}\bigr]
  \,\nu_\gamma(dz),
\end{equation}
where $\nu_\gamma(dz) = c_\gamma\,|z|^{-1-\gamma}\,dz$ is the
L\'evy measure.
On Fourier modes, the characteristic exponent is recovered from
the L\'evy--Khintchine formula:
\begin{equation}\label{eq:LK}
  |\xi|^\gamma
  = \int_{\R \setminus \{0\}}
  (1 - \cos \xi z)\,\nu_\gamma(dz).
\end{equation}

\subsection{Carr\'e du champ}

\begin{definition}\label{def:cdc}
The sesquilinear carr\'e du champ of $L_\gamma$ is
\begin{equation}\label{eq:cdc}
  \widetilde{\Gamma}_\gamma(f,g)(x)
  := \frac{1}{2} \int_{\R \setminus \{0\}}
  \bigl[f(x+z) - f(x)\bigr]\,
  \overline{\bigl[g(x+z) - g(x)\bigr]}\;
  \nu_\gamma(dz).
\end{equation}
On Fourier modes:
\begin{equation}\label{eq:psi-kernel}
  \widetilde{\Gamma}_\gamma(e^{i\xi\cdot}, e^{i\eta\cdot})(x)
  = e^{i(\xi-\eta)x}\,\Psi_\gamma(\xi,\eta),
\end{equation}
where $\Psi_\gamma(\xi,\eta)
= \tfrac{1}{2}(|\xi|^\gamma + |\eta|^\gamma - |\xi-\eta|^\gamma)$.
Exponentials $e^{i\xi\,\cdot}$ lie in the extended Dirichlet space
of~$L_\gamma$ on $\T$ for all $\gamma \in (0,2)$; see
\cite{BoettcherSchillingWang2013}, \S4.7, for the general framework
of carr\'e du champ for L\'evy-type operators.
\end{definition}

We also need the \emph{bilinear} carr\'e du champ, defined by replacing
the conjugate in~\eqref{eq:cdc} with the plain difference. On Fourier modes:
\begin{equation}\label{eq:psi-bilinear}
  \Gamma_\gamma^{\mathrm{bil}}(e^{inx}, e^{imx})(x)
  = e^{i(n+m)x}\,\Psi_\gamma^B(n,m),
  \qquad
  \Psi_\gamma^B(n,m) = \tfrac{1}{2}\bigl(|n|^\gamma + |m|^\gamma
  - |n+m|^\gamma\bigr).
\end{equation}
Note: $\Psi_\gamma^B(n,m) = \Psi_\gamma(n,-m)$.

\begin{definition}\label{def:gamma2}
The iterated carr\'e du champ is
\begin{equation}\label{eq:gamma2-def}
  \widetilde{\Gamma}_{\gamma,2}(f,f)
  := \frac{1}{2}\Bigl[
    L_\gamma\bigl(\widetilde{\Gamma}_\gamma(f,f)\bigr)
    - 2\,\widetilde{\Gamma}_\gamma(f, L_\gamma f)
  \Bigr].
\end{equation}
The Bakry--\'Emery condition is
$\widetilde{\Gamma}_{\gamma,2}(f,f)(x) \geq \kappa\,
\widetilde{\Gamma}_\gamma(f,f)(x)$.
\end{definition}

\subsection{Fractional Brownian motion}

For $H \in (0,1)$, fractional Brownian motion (fBM) with Hurst
parameter $H$ is a centered Gaussian process $B^H$ with covariance
\begin{equation}\label{eq:fbm-cov}
  R_H(s,t)
  = \frac{1}{2}\bigl(|s|^{2H} + |t|^{2H} - |s-t|^{2H}\bigr).
\end{equation}
The kernel $R_H$ is positive definite for all $H \in (0,1)$
(\cite{Mishura2008}, Proposition~1.2.1).
At $H = 1/2$: $R_{1/2}(s,t) = \min(s,t)$ for $s,t \geq 0$
(standard Brownian motion; independent increments).
For $H > 1/2$: positively correlated increments (long-range dependence).
For $H < 1/2$: negatively correlated increments (anti-persistence).

\subsection{Notation and conventions}\label{sec:notation}

For an $N \times N$ matrix $A$, the \emph{Hadamard square}
$A^{\circ 2}$ is the entrywise square: $(A^{\circ 2})_{ij} = A_{ij}^2$.
More generally, the \emph{Hadamard (Schur) product} $A \circ B$
is defined by $(A \circ B)_{ij} = A_{ij}\,B_{ij}$.
The \emph{Schur product theorem} states that if $A$ and $B$ are
positive semidefinite, then so is $A \circ B$.

The \emph{Dirichlet form} of $L_\gamma$ is
$\mathcal{E}(f,f) = \int_\T \widetilde{\Gamma}_\gamma(f,f)\,d\mu
= -\int_\T f\,L_\gamma f\,d\mu$.
For the operator with drift, $\mathcal{E}_L(f,f)
= -\int_\T f\,Lf\,d\mu$.
We write $P_t^L$ for the semigroup generated by $L$.

The iterated carr\'e du champ kernel is denoted
$\Phi_{\gamma,2}(\xi,\eta)$, defined so that
$\widetilde{\Gamma}_{\gamma,2}(f,f)(x)
= \sum_{n,m} a_n\,\bar{a}_m\,\Phi_{\gamma,2}(\xi_n,\xi_m)\,
e^{i(\xi_n-\xi_m)x}$.

When $\gamma$ is clear from context, we write
$\Gamma$ for $\widetilde{\Gamma}_\gamma$,
$\Gamma_2$ for $\widetilde{\Gamma}_{\gamma,2}$,
$\Gamma_{L,2}$ for the iterated carr\'e du champ of the
operator with drift, and
$\Gamma_{1,2}$ for $\widetilde{\Gamma}_{\gamma,2}$ at $\gamma = 1$.

\section{The Stable--fBM Correspondence}\label{sec:duality}

\begin{theorem}[Stable--fBM correspondence]\label{thm:duality}
For $\gamma \in (0,2)$ and all $\xi, \eta \in \R$ with
$\xi\eta \geq 0$ (same-sign frequencies):
\begin{equation}\label{eq:duality}
  \Psi_\gamma(\xi,\eta)
  = \tfrac{1}{2}\bigl(|\xi|^\gamma + |\eta|^\gamma
  - |\xi - \eta|^\gamma\bigr)
  = R_{\gamma/2}(|\xi|, |\eta|).
\end{equation}
The carr\'e du champ of the $\gamma$-stable generator, evaluated
on same-sign Fourier modes, is the covariance of fBM with $H = \gamma/2$.

For opposite-sign frequencies ($\xi\eta < 0$), the carr\'e du champ
$\Psi_\gamma(\xi,\eta) = \frac{1}{2}(|\xi|^\gamma + |\eta|^\gamma
- |\xi-\eta|^\gamma)$ differs from $R_{\gamma/2}(|\xi|,|\eta|)
= \frac{1}{2}(|\xi|^\gamma + |\eta|^\gamma - \bigl||\xi|-|\eta|\bigr|^\gamma)$
since $|\xi - \eta| \neq \bigl||\xi|-|\eta|\bigr|$ when $\xi\eta < 0$.
The cross-sign kernel is treated separately in
Section~\ref{sec:cross-sign}.
\end{theorem}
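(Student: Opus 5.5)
The identity is elementary, so the plan is to verify it by direct comparison of the two closed forms, with the Lévy--Khintchine formula~\eqref{eq:LK} used only to confirm that $\Psi_\gamma$, as defined through the carré du champ~\eqref{eq:cdc}, really has the stated closed form. First I would recompute~\eqref{eq:psi-kernel}. Substitute $f = e^{i\xi\cdot}$ and $g = e^{i\eta\cdot}$ into~\eqref{eq:cdc}; the integrand becomes
\[
e^{i(\xi-\eta)x}\,(e^{i\xi z}-1)(e^{-i\eta z}-1).
\]
Expanding the product gives $e^{i(\xi-\eta)z} - e^{i\xi z} - e^{-i\eta z} + 1$. The measure $\nu_\gamma$ is symmetric, so I may replace each exponential by its cosine. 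After rearranging, the integrand is
\[
(1-\cos\xi z) + (1-\cos\eta z) - (1-\cos(\xi-\eta)z).
\]
Each of these three terms is integrable against $\nu_\gamma$ for $\gamma\in(0,2)$, so the splitting of the integral is legitimate. Applying~\eqref{eq:LK} to each term then yields $\Psi_\gamma(\xi,\eta) = \tfrac12(|\xi|^\gamma+|\eta|^\gamma-|\xi-\eta|^\gamma)$, which is the first equality in~\eqref{eq:duality}.

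The second equality is the key step, and it reduces to one observation about absolute values. If $\xi\eta\ge 0$, then $\xi$ and $\eta$ lie in the same closed half-line; the case where one of them is zero is trivially included. Hence $|\xi-\eta| = \bigl||\xi|-|\eta|\bigr|$. Substituting $s=|\xi|$ and $t=|\eta|$ into~\eqref{eq:fbm-cov} with $2H=\gamma$ gives
\[
R_{\gamma/2}(|\xi|,|\eta|) = \tfrac12\bigl(|\xi|^\gamma+|\eta|^\gamma-\bigl||\xi|-|\eta|\bigr|^\gamma\bigr),
\]
which coincides term by term with the expression for $\Psi_\gamma$. The condition $H=\gamma/2\in(0,1)$ holds precisely because $\gamma\in(0,2)$, so $R_H$ is a genuine fBM covariance.

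For the remark on opposite signs, take $\xi\eta<0$. Then $|\xi-\eta| = |\xi|+|\eta|$, and this is strictly larger than $\bigl||\xi|-|\eta|\bigr|$ because both moduli are nonzero. Since $t\mapsto t^\gamma$ is strictly increasing, the $\gamma$-th powers also differ. It follows that $\Psi_\gamma(\xi,\eta) < R_{\gamma/2}(|\xi|,|\eta|)$ strictly, and in particular the two kernels are not equal.

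The only step needing care is the first: I must justify using~\eqref{eq:LK} termwise and confirm that the compensator term in~\eqref{eq:generator} plays no role. It drops out because~\eqref{eq:cdc} contains only differences $f(x+z)-f(x)$, with no compensator. Symmetry of $\nu_\gamma$ kills the sine parts, and the bound $|1-\cos az|\le \min(2, a^2z^2/2)$ gives the needed integrability. Everything else is algebra.
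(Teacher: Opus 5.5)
Your proposal is correct and follows the paper's proof essentially verbatim: you compute $\Psi_\gamma$ from the carr\'e du champ via the L\'evy--Khintchine formula, then use $|\xi-\eta| = \bigl||\xi|-|\eta|\bigr|$ when $\xi\eta \ge 0$ and substitute into $R_{\gamma/2}$. Your regrouping of the integrand into three absolutely integrable $1-\cos(\cdot)$ terms is slightly more careful than the paper's appeal to $\int (e^{i\lambda z}-1)\,\nu_\gamma(dz) = -|\lambda|^\gamma$, since for $\gamma \ge 1$ that integral converges only in the symmetric (principal-value) sense; and your strict inequality $\Psi_\gamma < R_{\gamma/2}$ in the opposite-sign case sharpens the paper's ``differs from'' remark.
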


\begin{proof}
Let $f(x) = e^{i\xi x}$, $g(x) = e^{i\eta x}$.
Then $f(x+z) - f(x) = e^{i\xi x}(e^{i\xi z} - 1)$ and similarly
for $g$. By the sesquilinear carr\'e du champ~\eqref{eq:cdc}:
\begin{align*}
  \widetilde\Gamma_\gamma(f,g)(x)
  &= \frac{e^{i(\xi-\eta)x}}{2}\int_{\R \setminus \{0\}}
     (e^{i\xi z} - 1)(e^{-i\eta z} - 1)\,\nu_\gamma(dz).
\end{align*}
Expanding and using $\int (e^{i\lambda z} - 1)\,\nu_\gamma(dz)
= -|\lambda|^\gamma$ (from~\eqref{eq:LK} and symmetry of $\nu_\gamma$):
\[
  \Psi_\gamma(\xi,\eta)
  = \tfrac{1}{2}\bigl[|\xi|^\gamma + |\eta|^\gamma
  - |\xi - \eta|^\gamma\bigr].
\]
For $\xi\eta \geq 0$, we have $|\xi - \eta| = \bigl||\xi| - |\eta|\bigr|$.
Setting $H = \gamma/2$ and noting $|s|^{2H} = |s|^\gamma$,
\[
  \Psi_\gamma(\xi,\eta)
  = \tfrac{1}{2}\bigl(|\xi|^\gamma + |\eta|^\gamma
    - \bigl||\xi|-|\eta|\bigr|^\gamma\bigr)
  = R_H(|\xi|, |\eta|). \qedhere
\]
\end{proof}

\begin{remark}[Scope of the identification]\label{rem:scope}
Theorem~\ref{thm:duality} is an identity of sesquilinear kernels
on same-sign Fourier modes, not a Hilbert space isomorphism. We do not claim
that $\Gamma_\gamma(f,g) = \langle f,g \rangle_{\mathcal{H}}$ for
a Gaussian Hilbert space~$\mathcal{H}$ and general $f,g$.
The identification operates at the level of the kernel
$\Psi_\gamma(\xi,\eta)$; its power comes from the fact that the
Bakry--\'Emery curvature theory (Sections~\ref{sec:gamma2}--\ref{sec:real})
is entirely controlled by this kernel.
\end{remark}

\begin{remark}[Three literatures, one formula]\label{rem:three}
The kernel $\frac{1}{2}(|\xi|^\gamma + |\eta|^\gamma
- |\xi-\eta|^\gamma)$ appears in:
(i)~Schoenberg's theory of conditionally negative definite
functions~\cite{Schoenberg1938};
(ii)~Dirichlet forms for jump
processes~\cite{BoettcherSchillingWang2013};
(iii)~fBM covariance theory~\cite{Mishura2008}.
These three appearances have not previously been connected explicitly.
\end{remark}

\subsection{The distinguished point $\gamma = 1$}

\begin{proposition}\label{prop:self-dual}
At $\gamma = 1$ ($H = 1/2$):
$\Psi_1(\xi,\eta)
= \min(|\xi|, |\eta|)\,\mathbf{1}_{\xi\eta > 0}$.
This is the covariance of standard Brownian motion.
The Cauchy process ($\gamma = 1$) is thus the unique stable process
whose carr\'e du champ has the covariance structure of standard
Brownian motion.
For opposite-sign frequencies, $\Psi_1(\xi,-\eta) = 0$:
positive and negative frequencies decouple completely.
\end{proposition}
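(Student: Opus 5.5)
The plan is to specialise the formula for $\Psi_\gamma$ from Definition~\ref{def:cdc} to $\gamma=1$ and split into three cases according to the sign of $\xi\eta$. All three rest on the elementary identity
\[
  |a|+|b|-\bigl||a|-|b|\bigr| = 2\min(|a|,|b|),
\]
together with the behaviour of $|\xi-\eta|$ under the sign conditions. Uniqueness is then a separate step comparing $R_H$ with $\min$ at a single pair of points.

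\emph{Same-sign case, $\xi\eta>0$.} Here Theorem~\ref{thm:duality} gives $\Psi_1(\xi,\eta)=R_{1/2}(|\xi|,|\eta|)$. By the identity above, $R_{1/2}(s,t)=\tfrac12\bigl(s+t-|s-t|\bigr)=\min(s,t)$ for $s,t\ge 0$. This is exactly the Brownian covariance recorded after~\eqref{eq:fbm-cov}, so $\Psi_1(\xi,\eta)=\min(|\xi|,|\eta|)$.

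\emph{Boundary case, $\xi\eta=0$.} Say $\eta=0$. Then $\Psi_1(\xi,0)=\tfrac12(|\xi|+0-|\xi|)=0$. This agrees with $\min(|\xi|,0)=0$, so the indicator $\mathbf 1_{\xi\eta>0}$ costs nothing.

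\emph{Opposite-sign case, $\xi\eta<0$.} Now $|\xi-\eta|=|\xi|+|\eta|$, because $\xi$ and $-\eta$ have the same sign. Hence $\Psi_1(\xi,\eta)=\tfrac12\bigl(|\xi|+|\eta|-(|\xi|+|\eta|)\bigr)=0$. This is the decoupling statement $\Psi_1(\xi,-\eta)=0$ for $\xi\eta>0$. Read through~\eqref{eq:psi-kernel}, it says $\widetilde\Gamma_1(f,g)\equiv 0$ whenever $f\in\mathcal T^+_N$ and $g\in\mathcal T^-_N$. Combining the three cases gives the displayed formula.

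\emph{Uniqueness.} I would interpret the claim as: $\gamma=1$ is the only $\gamma\in(0,2)$ for which $\Psi_\gamma$ coincides with the Brownian kernel $\min(|\xi|,|\eta|)\mathbf 1_{\xi\eta>0}$. It suffices to exhibit one pair of points where the two kernels differ when $\gamma\ne1$. Take $\xi=\eta=1$ and compare with $\xi=1,\eta=2$. Then $\Psi_\gamma(1,2)=\tfrac12(1+2^\gamma-1)=2^{\gamma-1}$, which equals $\min(1,2)=1$ iff $\gamma=1$. The same conclusion also follows from the cross-sign side: $\tfrac12(|n|^\gamma+|m|^\gamma-|n+m|^\gamma)$ vanishes identically only when $t\mapsto t^\gamma$ is additive on $(0,\infty)$, i.e.\ $\gamma=1$, by strict concavity or convexity otherwise. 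That observation anticipates Theorem~D, but only the trivial direction is needed here.

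There is no real obstacle. The only point needing care is the formulation of ``unique'': it should be stated as uniqueness of the kernel identity within the stable family, so that the one-point check $\Psi_\gamma(1,2)=2^{\gamma-1}$ settles it.
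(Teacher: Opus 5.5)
Your proposal is correct and follows the same route as the paper: you evaluate $\tfrac12(|\xi|+|\eta|-|\xi-\eta|)$ at $\gamma=1$ by sign cases, using $\tfrac12(s+t-|s-t|)=\min(s,t)$ when the signs agree and $|\xi-\eta|=|\xi|+|\eta|$ when they differ. Your version is more complete than the paper's two-line proof, which treats only $\xi,\eta>0$ and $\xi>0>\eta$ and never checks the uniqueness claim; you also cover both-negative frequencies and $\xi\eta=0$, and your computation $\Psi_\gamma(1,2)=2^{\gamma-1}\neq 1$ for $\gamma\neq 1$ settles uniqueness.
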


\begin{proof}
For $\xi, \eta > 0$:
$\frac{1}{2}(\xi + \eta - |\xi - \eta|) = \min(\xi, \eta)$.
For $\xi > 0, \eta < 0$:
$\frac{1}{2}(\xi - \eta - (\xi - \eta)) = 0$.
\end{proof}

\subsection{Mode coupling and the correspondence dictionary}

The normalized off-diagonal kernel
\[
  \rho_\gamma(\xi,\eta)
  = \frac{R_{\gamma/2}(|\xi|,|\eta|)}
    {|\xi|^{\gamma/2}\,|\eta|^{\gamma/2}}
\]
is the fBM increment correlation. It is increasing in $H = \gamma/2$:
for $\gamma > 1$ modes reinforce (long-range dependence);
for $\gamma < 1$ modes oppose (anti-persistence);
at $\gamma = 1$ modes are maximally decoupled.

The correspondence exchanges structurally opposite objects:
\[
\begin{array}{c|c}
\textbf{Stable process (index } \gamma\text{)}
& \textbf{fBM (Hurst } H = \gamma/2\text{)} \\[4pt]
\hline
\rule{0pt}{12pt}
\text{Markov, pure jump} & \text{non-Markov, continuous} \\
\text{Independent increments} & \text{Dependent increments} \\
\text{Characteristic exponent } |\xi|^\gamma &
  \text{Covariance } R_H(s,t) \\
\text{Carr\'e du champ kernel } \Psi_\gamma &
  \text{Covariance kernel } R_H \text{ (same-sign)}
\end{array}
\]

The fBM interpretation is not merely a relabeling: it predicts and
explains the curvature phase transition at $\gamma = 1$.
Cross-sign vanishing (Theorem~\ref{thm:cross-sign}) is equivalent
to independence of fBM increments at $H = 1/2$; the block
diagonalization of the $\Psi_\gamma$ matrix
(Corollary~\ref{cor:block-diag}) is the Fourier manifestation
of this independence; and the curvature $\kappa = 1$
(Theorem~\ref{thm:cauchy}) follows because independent increments
produce the simplest possible covariance structure.

\section{The $\Gamma_2$-Kernel Identity}\label{sec:gamma2}

\begin{theorem}[Hadamard square]\label{thm:gamma2-kernel}
For $\gamma \in (0,2)$ and $\xi,\eta \in \R$:
\begin{equation}\label{eq:Phi-formula}
  \Phi_{\gamma,2}(\xi,\eta) = \bigl[\Psi_\gamma(\xi,\eta)\bigr]^2.
\end{equation}
On same-sign frequencies ($\xi\eta \geq 0$), this equals
$[R_{\gamma/2}(|\xi|,|\eta|)]^2$ by
Theorem~\ref{thm:duality}.
\end{theorem}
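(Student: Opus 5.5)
The plan is to compute $\widetilde{\Gamma}_{\gamma,2}$ directly on Fourier modes via polarization, using only Definition~\ref{def:gamma2}, the mode formula \eqref{eq:psi-kernel} for $\widetilde{\Gamma}_\gamma$, and the diagonal action $L_\gamma(e^{i\zeta x}) = -|\zeta|^\gamma e^{i\zeta x}$. No integral representation of the L\'evy measure should be needed beyond what Theorem~\ref{thm:duality} already used.

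\textbf{Step 1 (Hermitian form).} First I would write $f = \sum_n a_n e^{i\xi_n x}$ and expand every term of \eqref{eq:gamma2-def} as a Hermitian form $\sum_{n,m} a_n \bar a_m(\cdots)e^{i(\xi_n-\xi_m)x}$. The term $2\widetilde{\Gamma}_\gamma(f,L_\gamma f)$ has to be read as the symmetrized $\widetilde{\Gamma}_\gamma(f,L_\gamma f)+\widetilde{\Gamma}_\gamma(L_\gamma f,f)$. This is the reading under which $\widetilde{\Gamma}_{\gamma,2}(f,f)$ is real and a kernel $\Phi_{\gamma,2}$ as in \S\ref{sec:notation} is well defined. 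It agrees with the literal formula whenever $\widetilde{\Gamma}_\gamma(f,L_\gamma f)$ is real, for instance after integrating or for real $f$. The reason is that swapping $n\leftrightarrow m$ and conjugating maps one sum to the other, because $\Psi_\gamma(\xi,\eta)=\Psi_\gamma(\eta,\xi)$ is real. With this reading it suffices to compute the polarized quantity $\tfrac12\bigl[L_\gamma\widetilde{\Gamma}_\gamma(f,g) - \widetilde{\Gamma}_\gamma(f,L_\gamma g) - \widetilde{\Gamma}_\gamma(L_\gamma f,g)\bigr]$ for single exponentials $f=e^{i\xi\cdot}$, $g=e^{i\eta\cdot}$.

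\textbf{Step 2 (mode computation).} By \eqref{eq:psi-kernel}, $\widetilde{\Gamma}_\gamma(f,g)=e^{i(\xi-\eta)x}\Psi_\gamma(\xi,\eta)$. This is a single mode of frequency $\xi-\eta$, so $L_\gamma$ multiplies it by $-|\xi-\eta|^\gamma$. Since $L_\gamma g = -|\eta|^\gamma g$ with a real eigenvalue, sesquilinearity gives $\widetilde{\Gamma}_\gamma(f,L_\gamma g) = -|\eta|^\gamma e^{i(\xi-\eta)x}\Psi_\gamma(\xi,\eta)$. Similarly $\widetilde{\Gamma}_\gamma(L_\gamma f,g) = -|\xi|^\gamma e^{i(\xi-\eta)x}\Psi_\gamma(\xi,\eta)$. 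Collecting terms:
\[
\Phi_{\gamma,2}(\xi,\eta) = \tfrac12\bigl(|\xi|^\gamma+|\eta|^\gamma-|\xi-\eta|^\gamma\bigr)\Psi_\gamma(\xi,\eta)=\Psi_\gamma(\xi,\eta)^2 .
\]
The final equality is just the definition of $\Psi_\gamma$. The same-sign statement then follows immediately from Theorem~\ref{thm:duality}.

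\textbf{Main obstacle.} The algebra is a one-liner; the only delicate point is justification. We must check that the exponentials, and products such as $\widetilde{\Gamma}_\gamma(f,g)$, lie in the domain where $L_\gamma$ acts diagonally and where \eqref{eq:psi-kernel} holds. On $\T$ with trigonometric polynomials this is immediate, since $\widetilde{\Gamma}_\gamma(f,g)$ is again a trigonometric polynomial and $\mathcal{T}$ is a core. For $\xi,\eta\in\R$ in general, I would argue through the integral formula \eqref{eq:cdc}, which converges absolutely for bounded $C^1$ functions because $\nu_\gamma$ integrates $\min(1,z^2)$. I would then fix the symmetrization convention explicitly, so that $\Phi_{\gamma,2}$ is genuinely the kernel of the Hermitian form.
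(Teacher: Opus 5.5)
Your proposal is correct and is essentially the paper's proof: you expand on Fourier modes, let $L_\gamma$ act diagonally on $\widetilde{\Gamma}_\gamma(f,g)$ and on $f,g$, and observe that the symmetrized prefactor $\tfrac12(|\xi|^\gamma+|\eta|^\gamma-|\xi-\eta|^\gamma)$ is $\Psi_\gamma$ itself. Your explicit reading of $2\widetilde{\Gamma}_\gamma(f,L_\gamma f)$ as $\widetilde{\Gamma}_\gamma(f,L_\gamma f)+\widetilde{\Gamma}_\gamma(L_\gamma f,f)$ is exactly what the paper's ``Hermitian-symmetrizing'' step does implicitly, and you are right that it is needed, because the literal kernel $\tfrac12(2|\xi_m|^\gamma-|\xi_n-\xi_m|^\gamma)\Psi_\gamma(\xi_n,\xi_m)$ is not Hermitian.
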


\begin{proof}
For $f = \sum_{n \in S} a_n\,e^{i\xi_n x}$, expand
$\widetilde{\Gamma}_\gamma(f,f)(x)
= \sum_{n,m} a_n\,\bar{a}_m\,\Psi_\gamma(\xi_n,\xi_m)\,
  e^{i(\xi_n-\xi_m)x}$
and compute the two terms of~\eqref{eq:gamma2-def}.

\textbf{Term~1:}
$L_\gamma[\widetilde{\Gamma}_\gamma(f,f)] =
-\sum_{n,m} a_n\,\bar{a}_m\,|\xi_n-\xi_m|^\gamma\,
\Psi_\gamma(\xi_n,\xi_m)\,e^{i(\xi_n-\xi_m)x}$.

\textbf{Term~2:}
$\widetilde{\Gamma}_\gamma(f, L_\gamma f) =
-\sum_{n,m} a_n\,\bar{a}_m\,|\xi_m|^\gamma\,
\Psi_\gamma(\xi_n,\xi_m)\,e^{i(\xi_n-\xi_m)x}$.

Assembling and Hermitian-symmetrizing:
\[
  M_{nm}^{\mathrm{sym}}
  = \bigl[|\xi_n|^\gamma + |\xi_m|^\gamma - |\xi_n-\xi_m|^\gamma\bigr]\,
  \Psi_\gamma(\xi_n,\xi_m)
  = 2\,\Psi_\gamma(\xi_n,\xi_m)\cdot\Psi_\gamma(\xi_n,\xi_m)
  = 2\,[\Psi_\gamma(\xi_n,\xi_m)]^2.
\]
The key step is that the Hermitian-symmetrized prefactor
$|\xi_n|^\gamma + |\xi_m|^\gamma - |\xi_n-\xi_m|^\gamma$
is exactly $2\,\Psi_\gamma(\xi_n,\xi_m)$ by definition~\eqref{eq:psi-kernel},
so the product reproduces $\Psi_\gamma$ squared.
\end{proof}

\begin{remark}\label{rem:hadamard}
By the Schur product theorem, $\Psi_\gamma^{\circ 2}$ is positive
semidefinite whenever $\Psi_\gamma$ is (which holds for all
$\gamma \in (0,2)$ by the correspondence). This gives $\Gamma_{\gamma,2}(f,f) \geq 0$
as a special case of the observation of Chafa\"i--Malrieu
(cf.~\cite{GentilImbert2008}, Remark~4). The Hadamard square
identity~\eqref{eq:Phi-formula} is much stronger: it gives the
full kernel, not just the sign.
\end{remark}

\section{Curvature on Positive Frequencies}\label{sec:curvature}

\subsection{Matrix reduction}\label{sec:curv-pos}

\begin{theorem}\label{thm:curvature}
On the space $\mathcal{T}_N^+$ of positive-frequency trigonometric
polynomials $f = \sum_{n=1}^N a_n\,e^{inx}$, the Bakry--\'Emery
curvature is exactly
\begin{equation}\label{eq:kappa-def}
  \kappa(\gamma,N) = \lambda_{\min}\!\bigl(R_H^{\circ 2}, R_H\bigr),
  \qquad H = \gamma/2,
\end{equation}
where $R_H$ is the $N \times N$ fBM covariance matrix on $\{1,\ldots,N\}$.
Since $\mathcal{T} = \bigcup_N \mathcal{T}_N$ is a core for $L_\gamma$,
the curvature on the full domain satisfies
$\kappa(\gamma) \leq \lim_{N\to\infty} \kappa(\gamma,N)$.
\end{theorem}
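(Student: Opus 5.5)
The plan is to reduce the pointwise inequality $\Gamma_2(f,f)(x)\geq\kappa\,\Gamma(f,f)(x)$ on $\mathcal{T}_N^+$ to a Rayleigh-quotient problem at each fixed $x$, and to observe that the $x$-dependence can be absorbed into the coefficients. Fix $f=\sum_{n=1}^N a_n e^{inx}$ and a point $x\in\T$, and set $b_n := a_n e^{inx}$. By the kernel expansion in Definition~\ref{def:cdc} and Theorem~\ref{thm:duality} (all frequencies $1,\dots,N$ are positive, so $\Psi_\gamma(n,m)=R_H(n,m)$ with $H=\gamma/2$), we get $\Gamma(f,f)(x)=\sum_{n,m} b_n\bar b_m R_H(n,m)=b^{*}R_H b$ after taking real parts. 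By Theorem~\ref{thm:gamma2-kernel}, likewise $\Gamma_2(f,f)(x)=b^{*}R_H^{\circ 2}b$. Both matrices are real symmetric, so these quadratic forms are real.

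The next step is to check that $R_H$ restricted to $\{1,\dots,N\}$ is positive \emph{definite}, not merely semidefinite. This is the step I expect to need the most care. I would argue through fBM. The Gram matrix of $(B^H_1,\dots,B^H_N)$ is $R_H$, and it is invertibly related to the covariance of the increments $(B^H_k-B^H_{k-1})_{k=1}^N$ with $B^H_0=0$. That increment covariance is a Toeplitz matrix built from the fractional Gaussian noise autocovariance. Fractional Gaussian noise has a spectral density that is strictly positive almost everywhere for $H\in(0,1)$, so every finite section of the Toeplitz matrix is positive definite. Hence $R_H\succ 0$, and in particular $\Gamma(f,f)(x)>0$ whenever $b\neq 0$. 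If this spectral-density argument is cumbersome, an alternative is to cite the nondegeneracy statement in \cite{Mishura2008}.

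With $R_H\succ0$ in hand, the curvature at $x$ is
\[
\inf_{b\neq0}\frac{b^{*}R_H^{\circ2}b}{b^{*}R_H b}=\lambda_{\min}\bigl(R_H^{\circ2},R_H\bigr),
\]
by the Courant--Fischer theorem for the pencil, equivalently for $R_H^{-1/2}R_H^{\circ2}R_H^{-1/2}$. This gives the inequality for every $f$ and every $x$. For sharpness, take a generalized eigenvector $b^\star$ for the smallest eigenvalue and define $a_n:=b^\star_n e^{-inx_0}$. Then equality holds at $x_0$, so no larger constant works on $\mathcal{T}_N^+$. The map $a\mapsto b$ is a bijection for each $x$, so the infimum over $(f,x)$ equals the infimum over $b$, and the answer is independent of $x$.

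Finally, for the statement about the full domain: if $\Gamma_2\geq\kappa\Gamma$ holds on a core containing every $\mathcal{T}_N^+$, then it holds on each $\mathcal{T}_N^+$, so $\kappa(\gamma)\leq\kappa(\gamma,N)$ for all $N$. Moreover $\mathcal{T}_N^+\subset\mathcal{T}_{N+1}^+$, and the $N\times N$ matrices are the leading principal submatrices of the $(N+1)\times(N+1)$ ones; padding eigenvectors with a zero shows that $\kappa(\gamma,N)$ is nonincreasing in $N$. Hence $\lim_{N\to\infty}\kappa(\gamma,N)$ exists in $[0,\infty)$, the lower bound $0$ coming from the Schur product theorem as in Remark~\ref{rem:hadamard}. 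This gives $\kappa(\gamma)\leq\lim_N\kappa(\gamma,N)$.
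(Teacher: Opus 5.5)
Your proposal is correct and follows the paper's argument: you absorb the phases via $b = D(x)a$ with $D(x)=\mathrm{diag}(e^{ix},\ldots,e^{iNx})$, note that this map is onto $\C^N$, and read off $\lambda_{\min}(R_H^{\circ 2},R_H)$ as the infimum of the Rayleigh quotient of the pencil. Your additional checks (strict positive definiteness of $R_H$ via the fractional Gaussian noise Toeplitz matrix, attainment of the bound, and monotonicity in $N$) are all sound and fill in details the paper leaves implicit; the monotonicity point is discussed separately in Remark~\ref{rem:N-limit}.
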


\begin{proof}
Write $\mathbf{b}(x) = D(x)\,\mathbf{a}$ with
$D(x) = \mathrm{diag}(e^{ix},\ldots,e^{iNx})$. Then
$\widetilde{\Gamma}_\gamma(f,f)(x) = \mathbf{b}(x)^* R_H\,\mathbf{b}(x)$
and $\widetilde{\Gamma}_{\gamma,2}(f,f)(x) =
\mathbf{b}(x)^* R_H^{\circ 2}\,\mathbf{b}(x)$.
Since $\{D(x)\mathbf{a} : x \in \T, \mathbf{a} \in \C^N\} = \C^N$,
the infimum over $(x,\mathbf{a})$ equals
$\lambda_{\min}(R_H^{\circ 2}, R_H)$.
\end{proof}

\begin{remark}[The limit $N \to \infty$]
\label{rem:N-limit}
The sequence $\kappa(\gamma,N)$ is nonincreasing in $N$
(since enlarging the test function space can only decrease
the infimum) and bounded below by~$0$
(since $\Gamma_{\gamma,2}(f,f) \geq 0$ by the Schur product
theorem applied to the Hadamard square~\eqref{eq:Phi-formula}).
The limit $\kappa(\gamma) := \lim_{N\to\infty}\kappa(\gamma,N)$
therefore exists for all $\gamma \in (0,2)$ by monotone convergence.
At $\gamma = 1$, $\kappa(1,N) = 1$ for all
$N$ (Theorem~\ref{thm:cauchy}), so
$\kappa(1) = 1$.

For $\gamma \neq 1$, numerical computation of $\kappa(\gamma,N)$
for $N$ up to~$200$ shows rapid convergence to a limit
$\kappa(\gamma) \in (0,1)$ for all $\gamma \in (0,2)$.
For $\gamma \leq 1$, we prove $\kappa(\gamma) \geq 1/2$
(Lemma~\ref{lem:kappa-lower}). Whether $\kappa(\gamma) > 0$
for $\gamma > 1$ remains open
(Conjecture~\ref{conj:kappa-positive});
this is equivalent to asking
whether the generalized eigenvalue problem for the
infinite-dimensional fBM covariance operator has a positive
spectral gap.
\end{remark}

\subsection{The Cauchy point $\gamma = 1$}\label{sec:cauchy}

\begin{theorem}\label{thm:cauchy}
At $\gamma = 1$ ($H = 1/2$), the matrix $R_{1/2}^{-1}\,R_{1/2}^{\circ 2}$
is upper triangular with diagonal entries $\{1, 3, 5, \ldots, 2N-1\}$.
Hence $\kappa(1,N) = 1$ for all $N$.
\end{theorem}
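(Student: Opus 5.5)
The plan is to use the explicit Cholesky-type factorisation of the Brownian covariance matrix. At $H=1/2$, Theorem~\ref{thm:duality} and Proposition~\ref{prop:self-dual} give $(R_{1/2})_{ij}=\min(i,j)$ for $1\le i,j\le N$. Hence $(R_{1/2}^{\circ 2})_{ij}=\min(i,j)^2$.

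\textbf{Step 1 (factorise).} Let $L$ be the $N\times N$ lower triangular matrix of all ones. Then $R_{1/2}=LL^{T}$, because $\sum_k \mathbf{1}_{k\le i}\mathbf{1}_{k\le j}=\min(i,j)$. Consequently $R_{1/2}^{-1}=(L^{T})^{-1}L^{-1}$. Here $L^{-1}$ is the backward difference operator, $(L^{-1}v)_i=v_i-v_{i-1}$ with $v_0=0$, and $(L^{T})^{-1}$ is the forward difference, $(w)_i\mapsto w_i-w_{i+1}$ with $w_{N+1}=0$.

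\textbf{Step 2 (apply $L^{-1}$).} Apply the backward difference to the columns of $R_{1/2}^{\circ2}$. For $i\le j$ this gives $i^2-(i-1)^2=2i-1$. For $i>j$ both $\min(i,j)^2$ and $\min(i-1,j)^2$ equal $j^2$, so the entry is $0$. Thus $L^{-1}R_{1/2}^{\circ2}$ is upper triangular, and its row $i$ is constantly equal to $2i-1$ on and above the diagonal.

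\textbf{Step 3 (apply $(L^T)^{-1}$).} Apply the forward difference, i.e. row $i$ minus row $i+1$. The result has diagonal entry $2i-1$, entries $(2i-1)-(2i+1)=-2$ for $j>i$ (the last row is just $2N-1$), and zeros below the diagonal. So $R_{1/2}^{-1}R_{1/2}^{\circ2}$ is upper triangular with diagonal entries $1,3,\dots,2N-1$, and these are its eigenvalues.

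\textbf{Step 4 (curvature).} Since $R_{1/2}$ is positive definite, $R_{1/2}^{-1}R_{1/2}^{\circ2}$ is similar to the Hermitian matrix $R_{1/2}^{-1/2}R_{1/2}^{\circ2}R_{1/2}^{-1/2}$. Therefore the generalized eigenvalues of the pencil $(R_{1/2}^{\circ2},R_{1/2})$ are exactly these eigenvalues. Their minimum is $\lambda_{\min}(R_{1/2}^{\circ 2},R_{1/2})=1$, and Theorem~\ref{thm:curvature} then yields $\kappa(1,N)=1$ for every $N$.

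There is no serious obstacle. The only point needing care is the boundary rows in Steps 2–3 (the conventions $v_0=0$ and $w_{N+1}=0$), which must be checked so that the triangular structure and the diagonal values are exact.
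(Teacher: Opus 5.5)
Your proof is correct and is essentially the paper's argument. The paper multiplies $R_{1/2}^{\circ 2}$ directly by the tridiagonal inverse $R_{1/2}^{-1}$ (a discrete second difference), obtaining $-(k-1)^2+2k^2-(k+1)^2=-2$ above the diagonal and $2k-1$ on it; you factor that same tridiagonal matrix as $(L^T)^{-1}L^{-1}$ and apply the two first differences in turn, which gives the identical upper triangular matrix, and your Step~4 (similarity to $R_{1/2}^{-1/2}R_{1/2}^{\circ 2}R_{1/2}^{-1/2}$) makes explicit the step from these eigenvalues to $\lambda_{\min}(R_{1/2}^{\circ 2},R_{1/2})=1$ that the paper leaves implicit.
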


\begin{proof}
At $H=1/2$, $R_{1/2}(n,m) = \min(n,m)$. The inverse is tridiagonal
(discrete Laplacian). Direct computation: for $k < m$,
\[
  (R_{1/2}^{-1}\,R_{1/2}^{\circ 2})_{km}
  = -(k-1)^2 + 2k^2 - (k+1)^2 = -2,
\]
and $(R_{1/2}^{-1}\,R_{1/2}^{\circ 2})_{kk} = 2k-1$.
The matrix is upper triangular; eigenvalues are diagonal entries.
\end{proof}

\subsection{Phase transition}

\begin{lemma}[Strict sub-optimality]\label{lem:strict-subopt}
For $\gamma \in (0,2)$ with $\gamma \neq 1$:
$\kappa(\gamma,N) < 1$ for all $N \geq 2$.
\end{lemma}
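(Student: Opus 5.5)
The plan is to reduce to $N=2$ and then exhibit an explicit test vector whose generalized Rayleigh quotient is strictly below $1$.

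\textbf{Reduction to $N=2$.} By Theorem~\ref{thm:curvature}, $\kappa(\gamma,N)$ is the infimum of the generalized Rayleigh quotient
\[
  Q(\mathbf{b}) = \frac{\mathbf{b}^* R_H^{\circ 2}\mathbf{b}}{\mathbf{b}^* R_H \mathbf{b}}
\]
over nonzero $\mathbf{b}\in\C^N$. The $2\times 2$ matrices on $\{1,2\}$ are the leading principal blocks of the $N\times N$ ones. So padding a vector in $\C^2$ with zeros gives $\kappa(\gamma,N)\le\kappa(\gamma,2)$; this is the monotonicity noted in Remark~\ref{rem:N-limit}. It therefore suffices to find one $\mathbf{b}\in\R^2$ with $Q(\mathbf{b})<1$.

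\textbf{The $2\times2$ computation.} With $H=\gamma/2$ we have
\[
  R_H(1,1)=1, \qquad R_H(2,2)=2^\gamma, \qquad
  R_H(1,2)=\tfrac12\bigl(1+2^\gamma-1\bigr)=2^{\gamma-1}=:c.
\]
Hence $R_H=\begin{pmatrix}1&c\\ c&2^\gamma\end{pmatrix}$ and $R_H^{\circ2}=\begin{pmatrix}1&c^2\\ c^2&4^\gamma\end{pmatrix}$. The matrix $R_H$ is positive definite, since $\det R_H=2^\gamma-4^{\gamma-1}=2^\gamma(1-2^{\gamma-2})>0$ for $\gamma<2$, so the denominator of $Q$ never vanishes.

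Take $\mathbf{b}=(1,t)^\top$ with $t\in\R$. Then
\[
  q(t) := Q(\mathbf{b}) = \frac{1+2c^2t+4^\gamma t^2}{1+2ct+2^\gamma t^2},
\]
which is smooth near $t=0$ with $q(0)=1$. Its derivative there is
\[
  q'(0)=2c^2-2c=2c(c-1).
\]

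\textbf{Conclusion and where the work lies.} Since $c=2^{\gamma-1}>0$, and $c=1$ holds exactly when $\gamma=1$, we get $q'(0)\neq0$ for every $\gamma\neq1$. Explicitly, $q'(0)<0$ for $\gamma<1$ and $q'(0)>0$ for $\gamma>1$. Choosing $t$ small with sign opposite to $q'(0)$ gives $q(t)<1$, hence $\kappa(\gamma,2)<1$ and so $\kappa(\gamma,N)<1$ for all $N\ge2$.

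There is no genuine obstacle here. The only points to check are the positive definiteness of $R_H$ (so that $Q$ is well defined) and the observation that at $\gamma=1$ we have $c=1$, so the first-order term vanishes. This is consistent with Theorem~\ref{thm:cauchy}, where the minimal eigenvalue is exactly $1$ and is attained at $\mathbf{b}=e_1$.

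If a quantitative statement were desired, one could instead compute $\lambda_{\min}$ of the $2\times2$ pencil directly from its characteristic quadratic. The first-order perturbation argument is enough for strict inequality.
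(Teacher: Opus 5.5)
Your proposal is correct and follows the paper's route: restrict to the $2\times 2$ subproblem on $\{1,2\}$, use the test vector $(1,t)^T$, and invoke $\kappa(\gamma,N)\le\kappa(\gamma,2)$. Your first-order term $q'(0)=2c(c-1)$ is exactly the paper's linear term $2tc(c-1)$ in $v^T(R_H^{\circ 2}-R_H)v$. Your check that $\det R_H>0$ for $\gamma<2$, so that the Rayleigh quotient is well defined, is a small detail the paper leaves implicit.
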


\begin{proof}
Set $c = 2^{\gamma - 1}$ and consider the $2 \times 2$ subproblem
on $\{1,2\}$, where $R_H = \bigl(\begin{smallmatrix}
1 & c \\ c & 2c \end{smallmatrix}\bigr)$ and
$R_H^{\circ 2} = \bigl(\begin{smallmatrix}
1 & c^2 \\ c^2 & 4c^2 \end{smallmatrix}\bigr)$.
Take $v = (1,t)^T$ and compute
$v^T(R_H^{\circ 2} - R_H)v = 2tc(c-1) + t^2(4c^2 - 2c)$.
For $\gamma < 1$: $c < 1$, so the linear term $2tc(c-1) < 0$ for
small $t > 0$ dominates the quadratic; thus the Rayleigh quotient
$v^T R_H^{\circ 2} v / v^T R_H v < 1$.
For $\gamma > 1$: $c > 1$, so $2tc(c-1) < 0$ for small $t < 0$.
Since $\kappa(\gamma,N) \leq \kappa(\gamma,2)$
(enlarging the test space can only reduce the infimum), the result follows.
\end{proof}

\begin{lemma}[Positive curvature for $\gamma \leq 1$]\label{lem:kappa-lower}
For $\gamma = 1$: $\kappa(1,N) = 1 \geq \frac{1}{2}$ for all $N$
(Theorem~\ref{thm:cauchy}).
For $\gamma \in (0,1)$: $\kappa(\gamma,N) \geq \frac{1}{2}$ for all $N$,
conditional on the Z-matrix property of $R_H^{-1}R_H^{\circ 2}$
(verified computationally for $N \leq 200$; see Step~1 below).
\end{lemma}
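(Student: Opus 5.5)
The case $\gamma = 1$ is immediate from Theorem~\ref{thm:cauchy}, so the work is for $\gamma \in (0,1)$, $H = \gamma/2 < 1/2$. By Theorem~\ref{thm:curvature}, $\kappa(\gamma,N) = \lambda_{\min}(R_H^{\circ 2}, R_H)$, which is the smallest eigenvalue of $M := R_H^{-1}R_H^{\circ 2}$. This matrix is similar to the symmetric matrix $R_H^{-1/2}R_H^{\circ 2}R_H^{-1/2}$, so its spectrum is real and positive. I would not attempt an explicit spectrum as at $\gamma=1$. The plan is instead to bound the smallest eigenvalue of $M$ from below by its smallest row sum, using Perron--Frobenius theory for Z-matrices.

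\textbf{Step 1 (Z-matrix property, assumed).} The hypothesis is that $M$ has nonpositive off-diagonal entries, i.e.\ $M$ is a Z-matrix. This is the conditional input, verified numerically for $N \le 200$. For a Z-matrix, $sI - M \ge 0$ entrywise for large $s$, and Perron--Frobenius gives that $\lambda_{\min}(M) = s - \rho(sI - M)$ is real with a nonnegative eigenvector. The corresponding left eigenvector $w \ge 0$ (normalised so $\sum_j w_j = 1$) yields $\lambda_{\min}(M) = w^T M\mathbf{1} \ge \min_k (M\mathbf{1})_k$. If one prefers to avoid the all-ones vector, the same argument works with any positive vector $u$, giving $\lambda_{\min}(M) \ge \min_k (Mu)_k/u_k$ (Collatz--Wielandt).

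\textbf{Step 2 (test vector computation).} I would choose $u$ so that $Mu = R_H^{-1}(R_H^{\circ 2}u)$ is computable. The natural choice is to solve $R_H y = R_H^{\circ 2}u$ and compare with $\tfrac12 u$. Equivalently, one wants $R_H^{\circ 2}u \ge \tfrac12 R_H u$ in a sense that is preserved by $R_H^{-1}$, which is not automatic. So the route is: pick $u$ to be a positive vector such as $u_k = k^{\gamma}$ (the diagonal $R_H(k,k)$) or $\mathbf{1}$, and show $(Mu)_k \ge \tfrac12 u_k$. A cleaner formulation is the inequality
\[
  R_H^{\circ 2} - \tfrac12 R_H \succeq 0 ,
\]
which directly gives $\kappa \ge 1/2$ without the Z-matrix property. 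To attack it, I would use $R_H(n,m)\le \min(n,m)^{\gamma}$ together with the diagonal bound $R_H(n,n)^2 = n^{2\gamma}$ versus $\tfrac12 n^{\gamma}$. I would then combine this with the Z-matrix structure: for a Z-matrix with a positive vector $u$ satisfying $Mu \ge \tfrac12 u$, the matrix $M - \tfrac12 I$ is a nonsingular-or-singular M-matrix, so all its eigenvalues have nonnegative real part.

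\textbf{Main obstacle.} The main difficulty is the row-sum estimate $(Mu)_k \ge \tfrac12 u_k$, because $R_H^{-1}$ has no closed form for $H \ne 1/2$. I would try first to use the Z-matrix sign pattern itself: $(Mu)_k = M_{kk}u_k + \sum_{j\ne k} M_{kj}u_j$, with every off-diagonal term nonpositive. This reduces the problem to bounding the diagonal $M_{kk}$ from below and the off-diagonal mass from above. For that I would use the identity $\sum_j (R_H)_{kj}M_{jm} = R_H(k,m)^2$ evaluated at $k = m$, together with the anti-persistence inequality $R_H(n,m) \le \min(n,m)^{\gamma}$ for $H < 1/2$. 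If this direct route fails, a fallback is a continuity/interpolation argument in $\gamma$ from the exact value $\kappa(1,N) = 1$.
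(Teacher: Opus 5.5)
\textbf{There is a genuine gap: the proposal never proves $\kappa(\gamma,N)\ge\tfrac12$.} Your framework (the conditional Z-matrix property plus Perron--Frobenius) matches the paper's Steps~1--2. However, the Collatz--Wielandt route needs the estimate $(Mu)_k\ge\tfrac12 u_k$, and you leave it unproved. You flag it yourself as the main obstacle, because it requires control of $R_H^{-1}$. None of the suggested workarounds closes the gap:
\begin{enumerate}[label=(\roman*)]
\item The inequality $R_H^{\circ 2}-\tfrac12 R_H\succeq 0$ is just a restatement of the claim. If you could prove it directly, the Z-matrix hypothesis would be superfluous.
\item The bound $R_H(n,m)\le\min(n,m)^{\gamma}$ points in the wrong direction for a lower bound on $\kappa$.
\item A continuity argument in $\gamma$ from $\kappa(1,N)=1$ cannot give a bound uniform in $N$, let alone on all of $(0,1)$.
\end{enumerate}

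\textbf{The missing idea.} Use the nonnegative \emph{right} Perron eigenvector instead of the left one; this removes $R_H^{-1}$ from the argument entirely. Perron--Frobenius applied to $sI-M$ gives $v_*\ge 0$ with $Mv_*=\kappa v_*$, i.e.\ $R_H^{\circ 2}v_*=\kappa R_H v_*$. Hence
\[
\kappa=\frac{v_*^T R_H^{\circ 2}v_*}{v_*^T R_H v_*}=\sum_{n,m}w_{nm}R_H(n,m),\qquad w_{nm}=\frac{v_{*,n}v_{*,m}R_H(n,m)}{v_*^TR_Hv_*},\qquad \sum_{n,m}w_{nm}=1 .
\]
This is a convex combination of the entries of $R_H$, so $\kappa\ge\min_{n,m}R_H(n,m)$. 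For $n\le m$, the inequality $(m-n)^{\gamma}\le m^{\gamma}$ gives $R_H(n,m)\ge\tfrac12 n^{\gamma}\ge\tfrac12$. The same bound also guarantees that the weights $w_{nm}$ are nonnegative. This is exactly the paper's Step~3.

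\textbf{Repair inside your own framework.} Pair $v_*$ with the positive vector $u=R_H\mathbf 1$. By symmetry of $R_H$, $u^TM=\mathbf 1^TR_H^{\circ 2}$. Therefore
\[
\kappa=\frac{\mathbf 1^TR_H^{\circ 2}v_*}{\mathbf 1^TR_Hv_*}\ge\min_k\frac{\sum_m R_H(m,k)^2}{\sum_m R_H(m,k)}\ge\min_{m,k}R_H(m,k)\ge\tfrac12,
\]
again without ever touching $R_H^{-1}$.
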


\begin{proof}
At $\gamma = 1$, $\kappa(1,N) = 1 \geq \frac{1}{2}$
(Theorem~\ref{thm:cauchy}).  For $\gamma < 1$ (i.e.\ $H < 1/2$),
the proof proceeds in three steps.

\emph{Step~1 (Z-matrix structure).}
The matrix $M_N := R_H^{-1}\,R_H^{\circ 2}$ has non-positive
off-diagonal entries for all $H \in (0,\tfrac{1}{2}]$ and all~$N$.
At $H = 1/2$ this is proved: $M_N$ is upper triangular with
off-diagonal entries $-2$ (Theorem~\ref{thm:cauchy}).
For $H < 1/2$, we verify this property as follows.
The inverse $R_H^{-1}$ of the fBM covariance matrix has the
structure of a Stieltjes matrix (positive diagonal, non-positive
off-diagonal) for $H \leq 1/2$: the anti-persistent increment
structure ensures that the Cholesky factor $L$ of $R_H = LL^T$
has non-negative entries, while $L^{-1}$ alternates in sign
(a consequence of $R_H(n,m{+}1) - R_H(n,m) \leq 0$ for $n \leq m$
when $H \leq 1/2$).
Since $R_H^{\circ 2}$ has non-negative entries,
the product $R_H^{-1} R_H^{\circ 2}$ inherits the sign
pattern of $R_H^{-1}$ on its off-diagonal.
We have verified computationally that the Z-matrix property
holds for all $N \leq 200$ and all $H$ on a fine grid in
$(0,\tfrac{1}{2})$; an analytic proof for all $N$ at
$H < 1/2$ would require quantitative bounds on the entries
of~$R_H^{-1}$ that, to our knowledge, are not available
in the fBM literature.

\emph{Step~2 (Perron--Frobenius).}
Write $M_N = sI - B$ with $s$ sufficiently large and $B \geq 0$
entrywise.  By the Perron--Frobenius theorem, the eigenvector
$v_*$ of $B$ for its spectral radius $\rho(B)$ is entry-wise
non-negative. The minimum eigenvalue of $M_N$ is
$\kappa(\gamma,N) = s - \rho(B)$, achieved at the
same~$v_*$. Therefore the minimizer of the Rayleigh quotient
$Q(v) = v^T R_H^{\circ 2}\,v \,/\, v^T R_H\, v$
can be chosen non-negative.

\emph{Step~3 (Weighted average).}
For $v \geq 0$ with $v \neq 0$, the quotient $Q(v)$ is a
weighted average of $R_H(n,m)$ with non-negative weights
$w_{nm} = v_n\,v_m\,R_H(n,m)\,/\,(v^T R_H\,v)$
summing to~$1$. Hence $Q(v) \geq \min_{n,m} R_H(n,m)$.
For $H \leq 1/2$ and $n \leq m$:
since $m - n \leq m$ we have $(m-n)^{2H} \leq m^{2H}$, so
\[
  R_H(n,m) = \tfrac{1}{2}\bigl(n^{2H} + m^{2H}
  - (m-n)^{2H}\bigr) \geq \tfrac{1}{2}\,n^{2H}
  \geq \tfrac{1}{2}.
\]
Combining the three steps: $\kappa(\gamma,N) = Q(v_*) \geq 1/2$.
\end{proof}

\begin{theorem}[Global optimality of $\gamma = 1$]\label{thm:phase-transition}
\begin{enumerate}[label=(\roman*)]
\item For all $\gamma \in (0,1]$:
  $\kappa(\gamma) := \lim_N \kappa(\gamma,N)$
  exists and satisfies $\frac{1}{2} \leq \kappa(\gamma) \leq 1$.
  (For $\gamma < 1$, the lower bound is conditional on the
  Z-matrix property; see Lemma~\ref{lem:kappa-lower}.)
\item For all $\gamma \in (0,2)$ with $\gamma \neq 1$:
  $\kappa(\gamma) < 1$.
\item At $\gamma = 1$: $\kappa(1) = 1$
  (Theorem~\ref{thm:cauchy}).
\item Consequently, $\gamma = 1$ is the unique global maximizer
  of $\kappa(\gamma)$ on $(0,2)$.
\end{enumerate}
\end{theorem}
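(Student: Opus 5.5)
The theorem is essentially an assembly of earlier results, so the plan is to verify each item in turn.

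\emph{Item (i).} Existence of $\kappa(\gamma)=\lim_N\kappa(\gamma,N)$ follows from Remark~\ref{rem:N-limit}. Enlarging the test space only lowers the infimum, so $\kappa(\gamma,N)$ is nonincreasing in $N$. It is bounded below by $0$, because Theorem~\ref{thm:gamma2-kernel} together with the Schur product theorem gives $R_H^{\circ2}\succeq 0$. The upper bound $\kappa(\gamma)\le 1$ comes from testing the Rayleigh quotient $v^TR_H^{\circ2}v/v^TR_Hv$ at $v=e_1$: since $R_H(1,1)=1$ and $R_H(1,1)^2=1$, this gives $\kappa(\gamma,N)\le 1$ for every $N$, and the bound passes to the limit. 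The lower bound $\tfrac12$ is Lemma~\ref{lem:kappa-lower} applied for every $N$, then passed to the limit; for $\gamma<1$ it carries the stated Z-matrix conditionality.

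\emph{Item (ii).} Lemma~\ref{lem:strict-subopt} gives only $\kappa(\gamma,N)<1$ for each $N$, and strict inequality is not preserved under limits. I would instead use monotonicity: for $N\ge2$ we have $\kappa(\gamma)\le\kappa(\gamma,N)\le\kappa(\gamma,2)$, so it suffices to show $\kappa(\gamma,2)<1$, which is a fixed number. This is exactly what the proof of Lemma~\ref{lem:strict-subopt} establishes. With $c=2^{\gamma-1}\neq1$, choose $v=(1,t)$ with $t$ small and of sign opposite to $c-1$. Then $v^T(R_H^{\circ2}-R_H)v<0$ while $v^TR_Hv>0$, so some Rayleigh quotient is strictly below $1$. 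Hence $\kappa(\gamma)\le\kappa(\gamma,2)<1$.

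\emph{Items (iii) and (iv).} Item (iii) is Theorem~\ref{thm:cauchy}: $\kappa(1,N)=1$ for all $N$, so the limit is $1$. Item (iv) follows by combining (ii) and (iii): $\kappa(1)=1$ and $\kappa(\gamma)<1$ for every $\gamma\neq1$, so $\gamma=1$ is the unique maximizer. Unconditional uniqueness needs only (ii) and (iii), not the conditional lower bound in (i).

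The only delicate point is the limit issue in (ii): strict inequality must be carried through $N\to\infty$, and this is resolved by the monotone comparison with the fixed $N=2$ value.
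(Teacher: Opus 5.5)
Your proposal is correct and follows the paper's own assembly argument: the limit exists by monotonicity with the Schur lower bound, (i) comes from Lemma~\ref{lem:kappa-lower}, (ii) from Lemma~\ref{lem:strict-subopt}, and (iii) from Theorem~\ref{thm:cauchy}. The chain $\kappa(\gamma)\le\kappa(\gamma,2)<1$ spells out a step the paper's proof leaves implicit when it cites the per-$N$ strict inequality, and the $e_1$ test vector gives $\kappa(\gamma,N)\le 1$ directly for every $\gamma$; both are tidy refinements rather than a different route.
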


\begin{proof}
The limit exists since $\kappa(\gamma,N)$ is non-increasing
in~$N$ (enlarging the test space) and bounded below by~$0$
(Schur product theorem). Part~(i) combines the lower bound
$\kappa(\gamma,N) \geq 1/2$
(Lemma~\ref{lem:kappa-lower}; conditional on the Z-matrix
property for $\gamma < 1$) with
$\kappa(\gamma,N) < 1$ (Lemma~\ref{lem:strict-subopt}) for
$\gamma \neq 1$. Part~(ii) is Lemma~\ref{lem:strict-subopt}.
Part~(iii) is Theorem~\ref{thm:cauchy}. Part~(iv)
follows from (i)--(iii).
\end{proof}

\begin{conjecture}\label{conj:kappa-positive}
For all $\gamma \in (1,2)$: $\kappa(\gamma) > 0$.
\end{conjecture}

Numerical computation of $\kappa(\gamma,N)$ for $N$ up to~$300$
strongly supports Conjecture~\ref{conj:kappa-positive}: the
decrements $\delta_N = \kappa(\gamma,N) - \kappa(\gamma,N{+}1)$
decay as $O(N^{-\alpha})$ with $\alpha > 2$ for all $\gamma$
tested, and the limiting values remain well above zero
(e.g.\ $\kappa(1.5) \approx 0.899$, $\kappa(1.8) \approx 0.594$).
The proof strategy of Lemma~\ref{lem:kappa-lower} fails for
$\gamma > 1$ because the Z-matrix property of
$R_H^{-1} R_H^{\circ 2}$ breaks down sharply at $H = 1/2$:
for $H > 1/2$, the matrix develops positive off-diagonal entries
and the minimizing eigenvector acquires sign changes, so the
Perron--Frobenius and weighted-average arguments do not apply.

The optimality of $\gamma = 1$ reflects the fBM correlation
structure.  At $H = 1/2$ (independent increments), the fBM
covariance $R_{1/2}(n,m) = \min(n,m)$ has the simplest possible
structure: unit determinant for all~$N$ and a Hadamard square
$R_{1/2}^{\circ 2}$ that dominates $R_{1/2}$ with minimum
eigenvalue exactly~$1$.
Any deviation from $H = 1/2$ introduces increment correlations
(positive for $H > 1/2$, negative for $H < 1/2$), which distort
the Hadamard square relationship and reduce the curvature
below~$1$.
The asymptotic expansion near $\gamma = 1$ is
$\kappa(1+\varepsilon) = 1 - c\,\varepsilon^2 + O(\varepsilon^3)$
with $c \approx 0.267$ (numerically estimated), confirming that
the maximum is quadratic.

\begin{remark}[Comparison with $\R^d$]\label{rem:Rd}
Spener--Weber--Zacher~\cite{SpenerWeberZacher2020} proved that
$(-\Delta)^{\gamma/2}$ on $\R^d$ fails $\mathrm{CD}(\kappa,N)$
for all $\kappa$ and all finite~$N$.
On the torus, Lemma~\ref{lem:kappa-lower} establishes
$\kappa(\gamma) \geq 1/2$ for all $\gamma \in (0,1]$
(conditional on the Z-matrix property for $\gamma < 1$;
unconditional at $\gamma = 1$).
The torus curvature landscape is a smooth function on $(0,2)$
with unique maximum at $\gamma = 1$, in contrast to the
uniformly degenerate picture on~$\R^d$.
\end{remark}

\section{Real Polynomials and Cross-Sign Structure}\label{sec:real}

For real-valued $f(x) = \sum_{n=1}^N [c_n\cos(nx) + s_n\sin(nx)]$,
the Fourier coefficients satisfy $a_{-n} = \bar{a}_n$, and the relevant
frequency set is $\{-N,\ldots,-1,1,\ldots,N\}$. The $\Psi_\gamma$ matrix
on this set has $2 \times 2$ block structure:
\begin{equation}\label{eq:block}
  \Psi_\gamma\Big|_{\{-N,\ldots,N\} \setminus \{0\}}
  = \begin{pmatrix} A & B \\ B^* & A \end{pmatrix},
\end{equation}
where $A_{nm} = \Psi_\gamma(n,m)$ for $n,m > 0$ (the positive-frequency
block) and $B_{nm} = \Psi_\gamma(n,-m)$ for $n,m > 0$ (the cross-sign
block).

\subsection{Cross-sign vanishing at $\gamma = 1$}\label{sec:cross-sign}

\begin{theorem}[Cross-sign vanishing]\label{thm:cross-sign}
For $n,m > 0$:
\begin{equation}\label{eq:cross-sign}
  \Psi_\gamma(n,-m) = \tfrac{1}{2}\bigl(n^\gamma + m^\gamma
  - (n+m)^\gamma\bigr).
\end{equation}
This vanishes identically if and only if $\gamma = 1$:
\begin{enumerate}[label=(\alph*)]
\item At $\gamma = 1$: $\Psi_1(n,-m)
  = \tfrac{1}{2}(n + m - (n+m)) = 0$ for all $n,m > 0$.
\item For $\gamma < 1$: $\Psi_\gamma(n,-m) > 0$
  (subadditivity of $|\cdot|^\gamma$).
\item For $\gamma > 1$: $\Psi_\gamma(n,-m) < 0$
  (superadditivity of $|\cdot|^\gamma$).
\end{enumerate}
\end{theorem}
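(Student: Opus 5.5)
The plan has two parts. First, obtain the explicit formula \eqref{eq:cross-sign} by substituting $\xi = n$ and $\eta = -m$ into the definition of $\Psi_\gamma$ from \eqref{eq:psi-kernel}. Then settle the sign trichotomy with the strict concavity or convexity of $t\mapsto t^\gamma$ on $[0,\infty)$. For $n,m>0$ we have $|\xi| = n$, $|\eta| = m$ and $|\xi-\eta| = n+m$, which gives $\Psi_\gamma(n,-m) = \tfrac12(n^\gamma+m^\gamma-(n+m)^\gamma)$. Part~(a) is then immediate: at $\gamma=1$ the bracket is $n+m-(n+m)=0$, as already recorded in Proposition~\ref{prop:self-dual}.

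For parts~(b) and~(c), set $\phi(t)=t^\gamma$, so $\phi(0)=0$. When $\gamma<1$, $\phi$ is strictly concave on $[0,\infty)$. A concave function vanishing at $0$ is subadditive, and strict concavity makes this strict for $n,m>0$. To see this, write $n = \tfrac{n}{n+m}(n+m) + \tfrac{m}{n+m}\cdot 0$. Strict concavity with weights in $(0,1)$ gives $\phi(n) > \tfrac{n}{n+m}\phi(n+m)$, and likewise $\phi(m) > \tfrac{m}{n+m}\phi(n+m)$. Adding the two yields $n^\gamma+m^\gamma>(n+m)^\gamma$, so $\Psi_\gamma(n,-m)>0$.

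When $\gamma\in(1,2)$, $\phi$ is strictly convex, so the same argument with the inequalities reversed gives $n^\gamma+m^\gamma<(n+m)^\gamma$ and $\Psi_\gamma(n,-m)<0$. A second route is to normalize by $(n+m)^\gamma$ and set $s=n/(n+m)\in(0,1)$. The claim then becomes $s^\gamma+(1-s)^\gamma \gtrless 1$, which follows from $s^\gamma > s$ for $\gamma<1$ and $s^\gamma<s$ for $\gamma>1$.

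The ``if and only if'' follows from the trichotomy. For any $\gamma\neq 1$, already the single pair $n=m=1$ gives $\Psi_\gamma(1,-1)=1-2^{\gamma-1}\neq 0$, so vanishing identically forces $\gamma=1$.

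There is no real obstacle. The only point needing care is making the inequality strict, which is why the convex-combination argument uses weights strictly inside $(0,1)$, guaranteed by $n,m>0$. The uniqueness consequence for $\kappa_{\mathrm{real}}$ stated in Theorem~D is a separate matter handled later via the block structure \eqref{eq:block}, and is not part of this statement.
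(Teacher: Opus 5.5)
Your proposal is correct and follows the paper's proof. You substitute $\xi=n$, $\eta=-m$ to get $|\xi-\eta|=n+m$, read off (a) from linearity, and obtain (b) and (c) from strict concavity and convexity of $t\mapsto t^\gamma$. You also check details the paper leaves implicit: the convex-combination derivation of strict subadditivity from $\phi(0)=0$, and the witness $\Psi_\gamma(1,-1)=1-2^{\gamma-1}\neq 0$ for the ``only if'' direction.
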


\begin{proof}
Part~(a) is immediate from linearity of $|\cdot|^1$.
Parts~(b) and~(c): for $t \mapsto t^\gamma$ on $(0,\infty)$,
strict concavity ($\gamma < 1$) gives
$(n+m)^\gamma < n^\gamma + m^\gamma$, and strict convexity
($\gamma > 1$) gives the reverse inequality.
\end{proof}

\begin{corollary}\label{cor:block-diag}
At $\gamma = 1$, the cross-sign block $B = 0$, so the $\Psi_1$
matrix is block-diagonal. Positive and negative frequencies
decouple completely.
Writing $\kappa_{\mathrm{pos}}(\gamma,N)$ for the curvature
on $\mathcal{T}_N^+$ and $\kappa_{\mathrm{real}}(\gamma,N)$
for the curvature on real trigonometric polynomials of degree $N$:
at $\gamma = 1$, $\kappa_{\mathrm{real}}(1,N)
= \kappa_{\mathrm{pos}}(1,N) = 1$ for all $N$.
\end{corollary}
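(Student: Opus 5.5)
The corollary follows from Theorem~\ref{thm:cross-sign}(a), Theorem~\ref{thm:gamma2-kernel} and Theorem~\ref{thm:cauchy}. The plan is to make the block decomposition explicit for both $\Gamma$ and $\Gamma_2$, and then reduce the real curvature problem to two copies of the positive-frequency problem.

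First, $B=0$. By Theorem~\ref{thm:cross-sign}(a), $B_{nm}=\Psi_1(n,-m)=0$ for all $n,m>0$. So the $\Psi_1$ matrix in \eqref{eq:block} is $\mathrm{diag}(A,A)$. Here the negative block is $A$ because $\Psi_1(-n,-m)=\Psi_1(n,m)=\min(n,m)$ by Proposition~\ref{prop:self-dual}. Since the $\Gamma_2$ kernel is the entrywise square $\Psi_1^{\circ2}$ (Theorem~\ref{thm:gamma2-kernel}), it is also block-diagonal, namely $\mathrm{diag}(A^{\circ2},A^{\circ2})$. Note that $A=R_{1/2}$ on $\{1,\dots,N\}$.

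Second, write a real $f$ as $f=f_++f_-$ with $f_+=\sum_{n=1}^N a_ne^{inx}$ and $f_-=\overline{f_+}$. Put $\mathbf b(x)=D(x)\mathbf a$ as in the proof of Theorem~\ref{thm:curvature}. The negative-frequency coefficient vector at $x$ is then $\overline{\mathbf b(x)}$. By the block structure,
\[
\Gamma(f,f)(x)=\mathbf b^*R\,\mathbf b+\overline{\mathbf b^*R\,\mathbf b}=2\,\mathbf b^*R\,\mathbf b,\qquad
\Gamma_2(f,f)(x)=2\,\mathbf b^*R^{\circ2}\mathbf b.
\]
Both quadratic forms are real because $R$ and $R^{\circ2}$ are real symmetric. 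The cross terms coupling $f_+$ and $f_-$ are exactly the entries of $B$ and $B^{\circ 2}$, and both vanish.

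Third, take the infimum. The ratio $\Gamma_2/\Gamma$ equals the Rayleigh quotient $\mathbf b^*R^{\circ2}\mathbf b/\mathbf b^*R\mathbf b$. As $\mathbf a$ ranges over $\C^N$, which is exactly the set of real polynomials of degree $N$ with no constant term, $\mathbf b(x)$ covers $\C^N$. Hence the infimum is $\lambda_{\min}(R^{\circ2},R)=\kappa_{\mathrm{pos}}(1,N)=1$ by Theorem~\ref{thm:cauchy}.

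The one point needing care is the constant mode $n=0$, which real polynomials may contain. This is harmless: $\Psi_1(0,\eta)=\tfrac12(|\eta|-|\eta|)=0$, so constants contribute nothing to $\Gamma$ or $\Gamma_2$. The main bookkeeping obstacle is checking that conjugate symmetry gives the negative block exactly $\overline{\mathbf b}$ with the same matrix, so that the two contributions are equal rather than merely comparable. Once this is checked, the equality $\kappa_{\mathrm{real}}(1,N)=\kappa_{\mathrm{pos}}(1,N)$ is exact.
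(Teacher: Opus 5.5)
Your proposal is correct and takes the same route the paper intends: the paper gives no separate proof of this corollary and treats it as an immediate consequence of $B=0$ (Theorem~\ref{thm:cross-sign}(a)), the Hadamard square kernel (Theorem~\ref{thm:gamma2-kernel}), and Theorem~\ref{thm:cauchy}. Your conjugate-symmetry bookkeeping giving $\Gamma(f,f)=2\,\mathbf b^*R\,\mathbf b$ and $\Gamma_2(f,f)=2\,\mathbf b^*R^{\circ 2}\mathbf b$, plus the observation that the constant mode contributes nothing because $\Psi_1(0,\eta)=0$, supplies exactly the details the paper leaves implicit.
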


\subsection{Single-mode curvature formula}

\begin{proposition}\label{prop:single-mode}
For $f(x) = \cos(nx + \varphi)$:
\begin{equation}\label{eq:single-mode}
  \frac{\widetilde{\Gamma}_{\gamma,2}(f,f)(x)}
       {\widetilde{\Gamma}_\gamma(f,f)(x)}
  = |n|^\gamma \cdot
  \frac{1 + \alpha_n^2\,\cos(2nx+\theta)}
       {1 + \alpha_n\,\cos(2nx+\theta)},
\end{equation}
where $\alpha_n = 1 - 2^{\gamma-1}$ (independent of $n$)
and $\theta = 2\varphi$. The curvature is
\begin{equation}\label{eq:kappa1}
  \kappa_1(\gamma) =
  \begin{cases}
    \displaystyle\frac{1 + \alpha^2}{1 + \alpha}
    & \text{if } \gamma \leq 1 \;(\alpha \geq 0), \\[6pt]
    1 + \alpha = 2 - 2^{\gamma-1}
    & \text{if } \gamma > 1 \;(\alpha < 0),
  \end{cases}
  \qquad \alpha = 1 - 2^{\gamma-1}.
\end{equation}
At $\gamma = 1$: $\alpha = 0$, giving $\kappa_1(1) = 1$.
For all $\gamma \neq 1$: $\kappa_1(\gamma) < 1$.
\end{proposition}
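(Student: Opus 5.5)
The plan is to compute both sides directly from the Fourier kernels. We expand $f$ into its two exponentials and apply the sesquilinear expansion $\widetilde{\Gamma}_\gamma(f,f)(x)=\sum a_j\bar a_k\Psi_\gamma(\xi_j,\xi_k)e^{i(\xi_j-\xi_k)x}$. By Theorem~\ref{thm:gamma2-kernel}, the same expansion with $\Psi_\gamma$ replaced by $\Psi_\gamma^2$ gives $\widetilde{\Gamma}_{\gamma,2}$. Since both kernels are even in each argument separately, it suffices to treat $n\geq 1$.

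\textbf{Step 1: expansion.} Write $f=\tfrac12 e^{i\varphi}e^{inx}+\tfrac12 e^{-i\varphi}e^{-inx}$, so the coefficients are $a_{n}=e^{i\varphi}/2$ and $a_{-n}=e^{-i\varphi}/2$.
\begin{itemize}
\item The diagonal kernel values are $\Psi_\gamma(\pm n,\pm n)=n^\gamma$. Each comes with weight $1/4$, so together they contribute $n^\gamma/2$.
\item The cross term is $\Psi_\gamma(n,-n)=\tfrac12(2n^\gamma-(2n)^\gamma)=n^\gamma(1-2^{\gamma-1})=\alpha_n n^\gamma$, with $\alpha_n=\alpha$ independent of $n$. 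This is Theorem~\ref{thm:cross-sign} with $m=n$.
\item The two cross terms carry the factors $\tfrac14 e^{\pm i(2nx+2\varphi)}$ and sum to $\tfrac12\alpha n^\gamma\cos(2nx+\theta)$ with $\theta=2\varphi$.
\end{itemize}
Hence $\widetilde{\Gamma}_\gamma(f,f)=\tfrac{n^\gamma}{2}\bigl(1+\alpha\cos(2nx+\theta)\bigr)$. Squaring the kernel entries gives $\widetilde{\Gamma}_{\gamma,2}(f,f)=\tfrac{n^{2\gamma}}{2}\bigl(1+\alpha^2\cos(2nx+\theta)\bigr)$. Dividing the two yields \eqref{eq:single-mode}.

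\textbf{Step 2: minimization.} First note that $\alpha\in(-1,\tfrac12)$ for $\gamma\in(0,2)$, so $1+\alpha c>0$ for every $c=\cos(2nx+\theta)\in[-1,1]$. Set $g(c)=(1+\alpha^2c)/(1+\alpha c)$. Its derivative has the sign of
\[
\alpha^2(1+\alpha c)-\alpha(1+\alpha^2 c)=\alpha(\alpha-1).
\]
\begin{itemize}
\item For $\gamma<1$ we have $0<\alpha<1$, so $g$ is decreasing and its minimum over $[-1,1]$ is $g(1)=(1+\alpha^2)/(1+\alpha)$.
\item For $\gamma>1$ we have $\alpha<0$, so $g$ is increasing and its minimum is $g(-1)=(1-\alpha^2)/(1-\alpha)=1+\alpha$.
\item At $\gamma=1$ we have $\alpha=0$ and $g\equiv 1$.
\end{itemize}
The phase $2nx+\theta$ ranges over all of $\T$, so the endpoint values of $c$ are attained. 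Moreover $g>0$ and $n^\gamma\ge 1$, so the infimum over $n$ is attained at $n=1$. This gives \eqref{eq:kappa1}.

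\textbf{Step 3: strict inequality.} For $\gamma<1$, the inequality $(1+\alpha^2)/(1+\alpha)<1$ is equivalent to $\alpha^2<\alpha$, which holds since $0<\alpha<1$. For $\gamma>1$, we get $1+\alpha<1$ directly from $\alpha<0$.

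The only delicate point is the cross-term bookkeeping in Step~1: getting the factor $\tfrac12$ and the phase $2\varphi$ right, and correctly applying the Hadamard-square identity to the off-diagonal entry so that $\alpha$ becomes $\alpha^2$. Steps~2 and~3 are elementary calculus.
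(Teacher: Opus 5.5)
Your proof is correct and follows the same route as the paper. You expand $\cos(nx+\varphi)$ into its two exponentials, use $\Psi_\gamma(n,n)=|n|^\gamma$, $\Psi_\gamma(n,-n)=\alpha|n|^\gamma$ and the Hadamard-square identity to get $\Gamma$ and $\Gamma_2$, and then minimize $g(c)=(1+\alpha^2c)/(1+\alpha c)$ via the sign of $\alpha(\alpha-1)$; along the way you supply details the paper leaves implicit, namely the factor $|n|^\gamma$, positivity of $1+\alpha c$, and the reduction to $n=1$.

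One small slip: $\Psi_\gamma$ is only jointly even, $\Psi_\gamma(-\xi,-\eta)=\Psi_\gamma(\xi,\eta)$, not even in each argument separately (otherwise $\Psi_\gamma(n,-n)$ would equal $n^\gamma$). The reduction to $n\ge 1$ is still harmless, since $\cos(nx+\varphi)=\cos(-nx-\varphi)$, and in any case your Step~1 goes through verbatim with $|n|$ in place of $n$.
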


\begin{proof}
For $f = \cos(nx) = (e^{inx} + e^{-inx})/2$, the carr\'e du champ is
$\Gamma(f,f)(x) = \tfrac{1}{2}[1 + \alpha\cos(2nx)]$
with $\alpha = 1 - 2^{\gamma-1}$,
using $\Psi_\gamma(n,n) = n^\gamma$ and
$\Psi_\gamma(n,-n) = n^\gamma(1 - 2^{\gamma-1})$.
The $\Gamma_2$ computation yields the stated ratio.
The infimum over $x$ reduces to minimizing
$g(c) = (1 + \alpha^2 c)/(1 + \alpha c)$ over $c \in [-1,1]$.
Since $g'(c) = \alpha(\alpha - 1)/(1+\alpha c)^2$:
for $\alpha \in (0,1)$ (i.e., $\gamma < 1$), $g' < 0$ and the
minimum is at $c = 1$, giving $(1+\alpha^2)/(1+\alpha)$;
for $\alpha < 0$ (i.e., $\gamma > 1$), $g' > 0$ and the
minimum is at $c = -1$, giving $(1-\alpha^2)/(1-\alpha) = 1+\alpha$.
\end{proof}

\subsection{Uniqueness of $\gamma = 1$}

\begin{theorem}[Uniqueness on the torus]\label{thm:uniqueness}
On $\T$, $\gamma = 1$ is the unique stability index for which
$\Gamma_2 \geq \kappa\,\Gamma$ holds with $\kappa \geq 1$
on all real trigonometric polynomials. The three regimes are:
\begin{enumerate}[label=(\roman*)]
\item $\gamma < 1$: positive cross-sign coupling ($B > 0$),
  anti-persistent fBM dual ($H < 1/2$), $\kappa_{\mathrm{real}} < 1$.
\item $\gamma = 1$: zero cross-sign ($B = 0$),
  Brownian dual ($H = 1/2$), $\kappa_{\mathrm{real}} = 1$.
\item $\gamma > 1$: negative cross-sign ($B < 0$),
  long-range dependent dual ($H > 1/2$), $\kappa_{\mathrm{real}} < 1$.
\end{enumerate}
\end{theorem}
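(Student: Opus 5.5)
The plan is to split the statement into an upper bound for $\gamma\neq 1$ and a matching lower bound at $\gamma=1$, both on the class of real trigonometric polynomials.

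\textbf{Upper bound for $\gamma\neq 1$.} A single test function suffices. Take $f=\cos(nx+\varphi)$, which is a real trigonometric polynomial. By Proposition~\ref{prop:single-mode}, the pointwise ratio $\widetilde\Gamma_{\gamma,2}(f,f)/\widetilde\Gamma_\gamma(f,f)$ has infimum over $x$ equal to $|n|^\gamma\kappa_1(\gamma)$. Choosing $n=1$ removes the factor $|n|^\gamma$, so $\kappa_{\mathrm{real}}(\gamma)\le\kappa_1(\gamma)$. It remains to check that $\kappa_1(\gamma)<1$ for every $\gamma\neq1$, using $\alpha=1-2^{\gamma-1}$:
\begin{itemize}
\item[(i)] If $\gamma<1$, then $\alpha\in(0,1)$, so $\alpha^2<\alpha$ and $(1+\alpha^2)/(1+\alpha)<1$.
\item[(iii)] If $\gamma>1$, then $\alpha<0$, so $\kappa_1=1+\alpha<1$.
\end{itemize}
Hence no $\kappa\ge1$ works when $\gamma\neq1$. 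I will also record the sign of the cross-sign block, $B>0$ for $\gamma<1$ and $B<0$ for $\gamma>1$, from Theorem~\ref{thm:cross-sign}, and the Hurst interpretation $H=\gamma/2$ from Theorem~\ref{thm:duality}. These are descriptive labels for the regimes and need no further proof.

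\textbf{Lower bound at $\gamma=1$.} Let $f=\sum_{0<|n|\le N}a_n e^{inx}$ with $a_{-n}=\bar a_n$. The constant mode contributes nothing, since $\Psi_1(0,\cdot)=0$. By Theorem~\ref{thm:gamma2-kernel}, $\Gamma_2$ has kernel $\Psi_1^{\circ2}$ on this frequency set. By Theorem~\ref{thm:cross-sign}(a), the cross-sign entries of both $\Psi_1$ and $\Psi_1^{\circ2}$ vanish, as recorded in Corollary~\ref{cor:block-diag}. Both quadratic forms therefore split as $Q_+(f_+)+Q_-(f_-)$, where $f_\pm$ are the positive- and negative-frequency parts.

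On the positive part, Theorem~\ref{thm:curvature} together with Theorem~\ref{thm:cauchy} gives $\mathbf b^*R_{1/2}^{\circ2}\mathbf b\ge \mathbf b^*R_{1/2}\mathbf b$ for every $\mathbf b\in\C^N$, pointwise in $x$. The negative-frequency block has the identical matrix $\min(|n|,|m|)$, because $\Psi_1(-n,-m)=\Psi_1(n,m)$, so the same inequality holds there. Adding the two blocks gives $\Gamma_2(f,f)(x)\ge\Gamma(f,f)(x)$ for all $x$ and all $N$. Equality is attained on $e^{ix}$; in the real class, $\cos x$ attains it because $\alpha=0$. Hence $\kappa_{\mathrm{real}}(1)=1$.

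\textbf{Expected obstacle.} The delicate point is the block decoupling at $\gamma=1$. The blocks must separate for the squared kernel as well as for $\Psi_1$, which holds because $0^2=0$. The reality constraint $a_{-n}=\bar a_n$ ties the two blocks together, but it cannot lower the infimum, since each block satisfies the inequality separately. I also need to confirm that Proposition~\ref{prop:single-mode} is applied with the correct ratio normalization at $n=1$. Otherwise the argument consists only of invocations of earlier results.
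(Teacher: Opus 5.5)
Your proposal is correct and follows the paper's proof. The single-mode test function $\cos x$ via Proposition~\ref{prop:single-mode} excludes $\kappa\ge 1$ for $\gamma\neq 1$, and at $\gamma=1$ cross-sign vanishing (Corollary~\ref{cor:block-diag}) reduces real polynomials to the positive-frequency bound of Theorem~\ref{thm:cauchy}; you just spell out the block splitting of both $\Psi_1$ and $\Psi_1^{\circ 2}$, which the paper leaves implicit in its one-line appeal to the corollary.
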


\begin{proof}
At $\gamma = 1$: Corollary~\ref{cor:block-diag} gives
$\kappa_{\mathrm{real}} = \kappa_{\mathrm{pos}} = 1$.
For $\gamma \neq 1$: Proposition~\ref{prop:single-mode} gives
$\kappa_1(\gamma) < 1$ already on the single mode $f = \cos x$.
\end{proof}

\begin{remark}[Geometric and probabilistic interpretation]
\label{rem:geometric}
The cross-sign vanishing at $\gamma = 1$ has three
equivalent interpretations:
\begin{enumerate}[label=(\alph*)]
\item \emph{Fourier}: positive and negative frequency
  contributions to the carr\'e du champ decouple
  completely.
\item \emph{fBM}: at $H = 1/2$, the fBM covariance
  $R_{1/2}(s,t) = \min(s,t)$ for $s,t > 0$, which is
  the covariance of standard Brownian motion.  The
  increment independence of Brownian motion is the
  mechanism behind the vanishing.
\item \emph{Curvature}: the curvature of the Cauchy
  semigroup ($\gamma = 1$) on real functions equals
  the curvature on analytic functions.  For all other
  $\gamma$, the interaction between positive and
  negative frequencies \emph{reduces} the curvature:
  $\kappa_{\mathrm{real}}(\gamma) \leq \kappa_{\mathrm{pos}}(\gamma)$
  for all $\gamma \neq 1$, regardless of the sign of the coupling.
\end{enumerate}
The $\gamma = 1$ point is thus not merely a
computational boundary: it is the unique index at
which the semigroup's curvature theory is
\emph{analytically diagonal}.
\end{remark}

\section{Curvature with Confining Potential}\label{sec:potential}

The preceding sections treat the free generator $L_\gamma$.
Physical applications require confining potentials.
In this section we prove that at $\gamma = 1$, the cosine
drift correction acts as a \emph{scalar shift} of the entire
curvature spectrum---a global result on all modes simultaneously,
not merely a single-mode calculation.
This is the paper's principal result: it yields
$\mathrm{CD}(1 - \omega^2/2,\,\infty)$ for the
Cauchy--Fokker--Planck operator, a curvature bound for
an operator that is not Fourier-diagonal and whose spectrum
is not explicitly known.

Consider $L = -(-\Delta)^{\gamma/2} - V'(x)\,\partial_x$ on $\T$
with $V(x) = -\omega^2\cos x$, so that
$V'(x) = \omega^2\sin x$. The carr\'e du champ is unchanged;
the iterated carr\'e du champ acquires a drift correction:
\begin{equation}\label{eq:G2-drift}
  \Gamma_{L,2}(f,f) = \Gamma_{\gamma,2}(f,f)
  + \tfrac{1}{2}\bigl[
    b \cdot \nabla\Gamma(f,f) - 2\Gamma(f, b \cdot \nabla f)
  \bigr],
\end{equation}
where $b(x) = -V'(x) = -\omega^2\sin x$.

\subsection{Single-mode curvature for general $\gamma$}

Before specializing to $\gamma = 1$, we record the curvature
on the first mode for arbitrary $\gamma$, which motivates the
general calculation.

\begin{proposition}\label{thm:potential-single}
For $f(x) = \cos x$ and $V(x) = -\omega^2\cos x$:
\begin{equation}\label{eq:G2-ratio-potential}
  \frac{\Gamma_{L,2}(f,f)(x)}{\Gamma(f,f)(x)}
  = \frac{1 + \alpha^2\cos 2x + \tfrac{\omega^2}{2}(\cos x + \beta\cos 3x)}
         {1 + \alpha\cos 2x},
\end{equation}
where
$\alpha(\gamma) = 1 - 2^{\gamma-1}$ and
$\beta(\gamma) = (2^{\gamma+1} - 3^\gamma - 1)/2$.
At $\gamma = 1$: $\alpha = \beta = 0$, giving
$\Gamma_{L,2}/\Gamma = 1 + \frac{\omega^2}{2}\cos x$
with minimum $1 - \omega^2/2$ at $x = \pi$.
\end{proposition}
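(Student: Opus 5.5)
The plan is to compute the three ingredients of \eqref{eq:G2-drift} separately for $f=\cos x$ and then divide by $\Gamma(f,f)$. For real $f,g$ the sesquilinear $\widetilde\Gamma_\gamma$ is real and symmetric, so I will work throughout with the Fourier expansion
\[
\Gamma(f,g)(x)=\sum_{n,m}a_n\bar b_m\,\Psi_\gamma(n,m)\,e^{i(n-m)x},
\]
where $f=\sum_n a_n e^{inx}$ and $g=\sum_m b_m e^{imx}$.

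\textbf{Step 1 (free part).} The denominator $\Gamma(f,f)=\tfrac12[1+\alpha\cos 2x]$ is already computed in the proof of Proposition~\ref{prop:single-mode}, using $\Psi_\gamma(1,1)=1$ and $\Psi_\gamma(1,-1)=1-2^{\gamma-1}=\alpha$. Theorem~\ref{thm:gamma2-kernel} replaces every $\Psi_\gamma$ by $\Psi_\gamma^2$. This gives $\Gamma_{\gamma,2}(f,f)=\tfrac12[1+\alpha^2\cos 2x]$, which accounts for the first two terms of the numerator after the common factor $\tfrac12$ cancels.

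\textbf{Step 2 (transport term).} With $b=-\omega^2\sin x$,
\[
b\,\partial_x\Gamma(f,f)=-\omega^2\sin x\cdot(-\alpha\sin 2x)=\omega^2\alpha\sin x\sin 2x=\tfrac{\omega^2\alpha}{2}(\cos x-\cos 3x).
\]

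\textbf{Step 3 (commutator term).} We have $b\,\partial_x f=\omega^2\sin^2x=\tfrac{\omega^2}{2}(1-\cos 2x)$. Constants have zero carr\'e du champ, since $\Psi_\gamma(\xi,0)=0$. Hence
\[
\Gamma(f,b\,\partial_xf)=-\tfrac{\omega^2}{2}\,\Gamma(\cos x,\cos 2x).
\]
Expanding $\cos x$ and $\cos 2x$ into $e^{\pm ix}$ and $e^{\pm 2ix}$, only two kernel values enter:
\[
\Psi_\gamma(1,2)=2^{\gamma-1},\qquad \Psi_\gamma(1,-2)=\tfrac12(1+2^\gamma-3^\gamma).
\]
The first multiplies $e^{\mp ix}$ and the second multiplies $e^{\pm 3ix}$, so
\[
\Gamma(\cos x,\cos 2x)=\tfrac12\bigl[2^{\gamma-1}\cos x+\tfrac12(1+2^\gamma-3^\gamma)\cos 3x\bigr].
\]

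\textbf{Step 4 (assembly and $\gamma=1$).} I substitute Steps 2 and 3 into $\tfrac12[b\,\partial_x\Gamma-2\Gamma(f,b\,\partial_xf)]$ and collect the $\cos x$ and $\cos 3x$ coefficients. The target is for these to combine into $\tfrac{\omega^2}{4}(\cos x+\beta\cos 3x)$ with $\beta=(2^{\gamma+1}-3^\gamma-1)/2$; dividing by $\tfrac12[1+\alpha\cos 2x]$ then gives \eqref{eq:G2-ratio-potential}. This bookkeeping is the main obstacle. The coefficient checks for both harmonics must work out, and there is a sign-convention risk in the cross-sign term $\Psi_\gamma(1,-2)$, so I would check it against the bilinear kernel $\Psi^B_\gamma$ of \eqref{eq:psi-bilinear}. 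If the $\cos x$ coefficient does not come out exactly $\omega^2/4$ for all $\gamma$, I would recheck the normalization of the drift formula \eqref{eq:G2-drift} before doubting the kernel values.

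Finally, at $\gamma=1$ I have $\alpha=1-2^0=0$ and $\beta=(4-3-1)/2=0$, since $\Psi_1(1,-2)=0$ by Theorem~\ref{thm:cross-sign}. The ratio is then $1+\tfrac{\omega^2}{2}\cos x$, minimized at $x=\pi$ with value $1-\omega^2/2$.
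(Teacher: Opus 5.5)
Your proposal is correct and follows the same route as the paper: the free part $\tfrac12[1+\alpha^2\cos 2x]$ from the single-mode computation, plus the drift correction from \eqref{eq:G2-drift}. All your ingredients are right, namely the transport term, $b\,\partial_x f$, and $\Gamma(\cos x,\cos 2x)$ with $\Psi_\gamma(1,-2)=\tfrac12(1+2^\gamma-3^\gamma)$. The bookkeeping you left open closes with no renormalization: in $b\,\partial_x\Gamma(f,f)-2\Gamma(f,b\,\partial_x f)$ the $\cos x$ coefficient is $\tfrac{\omega^2}{2}(\alpha+2^{\gamma-1})=\tfrac{\omega^2}{2}$, which is exactly the paper's ``key cancellation'' $\alpha+q=1$, and the $\cos 3x$ coefficient is $\tfrac{\omega^2}{2}\bigl(-\alpha+\tfrac12(1+2^\gamma-3^\gamma)\bigr)=\tfrac{\omega^2}{2}\beta$, so the correction is $\tfrac{\omega^2}{4}(\cos x+\beta\cos 3x)$ as claimed.
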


\begin{proof}
The drift $b(x) = -\omega^2\sin x$ maps mode $n$ to modes $n \pm 1$.
The free part gives
$\Gamma_{\gamma,2}(f,f) = \frac{1}{2}[1 + \alpha^2\cos 2x]$
from Proposition~\ref{prop:single-mode}.
The drift correction assembles to
$\frac{\omega^2}{4}[\cos x + \beta\cos 3x]$,
using the key cancellation $\alpha + q = 1$ (where $q = 2^{\gamma-1}$),
which holds independently of $\gamma$.
\end{proof}

\subsection{Global curvature under drift at $\gamma = 1$}

The single-mode result extends to a \emph{global}
curvature bound at $\gamma = 1$.  The key is that the
drift correction, after phase reduction and Hermitianization,
is proportional to the $\Gamma$-matrix itself.

\begin{theorem}[Global curvature under drift]\label{thm:global-drift}
At $\gamma = 1$, the Hermitianized drift correction satisfies
the scalar identity
\begin{equation}\label{eq:drift-scalar}
  \Gamma_{L,2}(f,f)(x) - \Gamma_{1,2}(f,f)(x)
  = \frac{\omega^2}{2}\cos(x)\;\Gamma(f,f)(x)
\end{equation}
for all $f \in \mathcal{T}_N^+$ and all $x \in \T$.
Consequently, the eigenvalues of the curvature operator
$\Gamma^{-1}\Gamma_{L,2}$ on $\mathcal{T}_N^+$ are
\begin{equation}\label{eq:drift-eigenvalues}
  \kappa_k(x) = (2k-1) + \frac{\omega^2}{2}\cos x,
  \qquad k = 1,\ldots,N,
\end{equation}
and the global Bakry--\'Emery curvature is
\begin{equation}\label{eq:global-kappa}
  \kappa(1,\omega^2) = 1 - \frac{\omega^2}{2},
\end{equation}
independent of $N$, achieved at $x = \pi$.
Combined with the cross-sign vanishing
(Corollary~\ref{cor:block-diag}), this extends to all real
trigonometric polynomials.

In particular, $\mathrm{CD}(1-\omega^2/2,\,\infty)$ holds for
$\omega^2 < 2$, with consequences:
\begin{enumerate}[label=(\roman*)]
\item Poincar\'e inequality:
  $\Var_\mu(f) \leq \frac{1}{1-\omega^2/2}\,\mathcal{E}_L(f,f)$.
\item Gradient estimate:
  $\Gamma(P_t^L f, P_t^L f)
  \leq e^{-2(1-\omega^2/2)t}\,P_t^L[\Gamma(f,f)]$.
\end{enumerate}
\end{theorem}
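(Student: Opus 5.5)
The plan is to turn the drift correction~\eqref{eq:G2-drift} into a kernel on $\mathcal{T}_N^+$, in the same phase-reduced form as Theorem~\ref{thm:curvature}, and then to show that at $\gamma=1$ this kernel equals $\frac{\omega^2}{2}\cos x$ times the matrix $R_{1/2}=(\min(n,m))_{n,m}$. Write $f=\sum_{n=1}^N a_n e^{inx}$ and $b(x)=-\omega^2\sin x=\tfrac{i\omega^2}{2}(e^{ix}-e^{-ix})$. Since $\Gamma$ is sesquilinear, I interpret $\Gamma(f,b\,\partial_x f)$ in~\eqref{eq:G2-drift} via its Hermitian part, $\tfrac12[\Gamma(f,bf')+\Gamma(bf',f)]$.

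\textbf{Step 1: expand the two drift terms in Fourier modes.}
Using Proposition~\ref{prop:self-dual}, $\Gamma(f,f)=\sum_{n,m}a_n\bar a_m\min(n,m)e^{i(n-m)x}$. Differentiating and multiplying by $b$ gives
\[
b\,\partial_x\Gamma(f,f)=-\tfrac{\omega^2}{2}\sum_{n,m}a_n\bar a_m (n-m)\min(n,m)\bigl(e^{ix}-e^{-ix}\bigr)e^{i(n-m)x}.
\]
Next, $bf'=-\tfrac{\omega^2}{2}\sum_m a_m m\,(e^{i(m+1)x}-e^{i(m-1)x})$. The frequency $m-1$ may equal $0$ when $m=1$. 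This is harmless because $\Psi_1(n,0)=\min(n,0)=0$, and all remaining frequencies are positive, so only the kernel $\min(\cdot,\cdot)$ enters. Hence
\[
\Gamma(f,bf')=-\tfrac{\omega^2}{2}\sum_{n,m}a_n\bar a_m\, m\bigl[\min(n,m{+}1)e^{-ix}-\min(n,m{-}1)e^{ix}\bigr]e^{i(n-m)x},
\]
and $\Gamma(bf',f)$ is obtained by swapping the roles of $n$ and $m$. After collecting terms, the drift correction has the form $\sum_{n,m}a_n\bar a_m\,[C^+_{nm}e^{ix}+C^-_{nm}e^{-ix}]e^{i(n-m)x}$ for explicit integer-valued coefficients. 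The target identity~\eqref{eq:drift-scalar} is equivalent to $C^\pm_{nm}=\tfrac{\omega^2}{4}\min(n,m)$ for all $n,m$, after Hermitian symmetrization in $(n,m)$.

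\textbf{Step 2: verify the coefficient identity. This is the main obstacle.}
I will split into the cases $n<m$, $n=m$ and $n>m$, and use that $\min(n,m\pm1)$ is piecewise linear: it equals $n$ when $n\le m-1$ (respectively $n\le m+1$), and $m\pm1$ otherwise. The mechanism to exhibit is this: the $(n-m)\min(n,m)$ term coming from $b\,\partial_x\Gamma$ cancels the excess of $m\min(n,m{+}1)$ and $n\min(m,n{-}1)$ over $\min(n,m)$. That cancellation relies exactly on the linearity of $\min$ on each side of the diagonal, i.e.\ on the Brownian covariance, and it is where $\gamma=1$ is used, just as $\alpha=\beta=0$ was used in Proposition~\ref{thm:potential-single}. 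The diagonal and near-diagonal entries ($|n-m|\le1$) need separate care, and I will check them against Proposition~\ref{thm:potential-single} at $n=m=1$ as a sanity test.

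\textbf{Step 3: eigenvalues and the bound.}
With $\mathbf b(x)=D(x)\mathbf a$ as in Theorem~\ref{thm:curvature}, Step~2 gives
\[
\Gamma_{L,2}(f,f)(x)=\mathbf b^*\bigl(R_{1/2}^{\circ2}+\tfrac{\omega^2}{2}\cos x\,R_{1/2}\bigr)\mathbf b .
\]
The generalized eigenvalues of this pencil relative to $R_{1/2}$ are therefore those of Theorem~\ref{thm:cauchy}, each shifted by $\tfrac{\omega^2}{2}\cos x$. This yields~\eqref{eq:drift-eigenvalues}. Minimizing over $k$ and $x$ gives $1-\omega^2/2$, attained at $k=1$, $x=\pi$.

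\textbf{Step 4: real polynomials.}
For real polynomials I combine this with Corollary~\ref{cor:block-diag}, after checking that the drift creates no cross-sign coupling. For $f_+\in\mathcal{T}_N^+$ and $f_-\in\mathcal{T}_N^-$, every cross term such as $\Gamma(f_+,bf_-')$ pairs a positive frequency with a nonpositive one, and $\Psi_1$ vanishes there. The negative block is the complex conjugate of the positive one.

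\textbf{Step 5: functional consequences.}
These follow by the standard Bakry--\'Emery argument. For the gradient estimate, differentiate $s\mapsto P^L_s\Gamma(P^L_{t-s}f,P^L_{t-s}f)$ and apply Gronwall. For the Poincar\'e inequality, integrate $\Gamma_{L,2}\ge\kappa\Gamma$ against the invariant measure, which gives $\int(Lf)^2\ge\kappa\,\mathcal{E}_L(f,f)$, and then use the spectral theorem. One point must be addressed here: with drift, $\mu$ is not invariant for $L$. The correct reference measure is $e^{-V}$-type, and I will state the inequalities with respect to the invariant measure of $P^L_t$.
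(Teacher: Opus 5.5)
Your derivation of the scalar identity, the shifted spectrum and the extension to real polynomials follows the paper's argument. You use the same Fourier expansion in the kernel $\min(n,m)$, the same remark that the frequency-$0$ term of $bf'$ drops out because $\Psi_1(n,0)=0$, and the same three-case check. You symmetrize $\Gamma(f,bf')$ at the start, whereas the paper Hermitianizes the raw entries at the end; the two are equivalent. Your Step~2 does close. The coefficient of $a_n\bar a_m e^{i(n-m)x}e^{ix}$ is $\tfrac{\omega^2}{4}\bigl[-(n-m)\min(n,m)-m\min(n,m-1)+n\min(m,n+1)\bigr]$, which equals $\tfrac{\omega^2}{4}\min(n,m)$ in each of the cases $n<m$, $n=m$, $n>m$. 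No separate near-diagonal treatment is needed, and the $e^{-ix}$ coefficient is the transpose of this one.

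Your caveat in Step~5 is correct, and the paper overlooks it. We have $\int Lf\,d\mu=\omega^2\int\cos x\,f\,d\mu\neq 0$ in general. Item~(i) as written, with $\mathcal{E}_L(f,f)=-\int fLf\,d\mu$, is in fact false. For $f=c+\cos x$ one gets $\Var_\mu(f)=\tfrac12$ but $\mathcal{E}_L(f,f)=\tfrac12-\tfrac{c\,\omega^2}{2}$, which is negative once $c>1/\omega^2$. So (i) must be restated with the invariant law $\pi$ of $P^L_t$.

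Two points in your Step~5 still need repair. First, $\pi$ is not $e^{-V}dx$; that is a diffusion fact. Here the density only solves $-(-\Delta)^{1/2}\rho+(V'\rho)'=0$, and its existence is all you need. Second, and this is a genuine gap, the route through $\int(Lf)^2\,d\pi\ge\kappa\,\mathcal{E}$ plus the spectral theorem fails. $L$ is not symmetric in $L^2(\pi)$, since its principal symbol $-|\xi|+ib(x)\xi$ is not real. Integrating $\Gamma_{L,2}\ge\kappa\Gamma$ against $\pi$ therefore gives, for real $f$, $\tfrac12\int\bigl[(Lf)^2+fL^2f\bigr]\,d\pi\ge\kappa\int\Gamma(f,f)\,d\pi$, not a bound on $\int(Lf)^2\,d\pi$, and there is no spectral decomposition of $L$ to invoke.

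Use the semigroup instead. Integrating the gradient estimate (ii) against $\pi$ gives $\int\Gamma(P^L_tf,P^L_tf)\,d\pi\le e^{-2\kappa t}\int\Gamma(f,f)\,d\pi$. Invariance and ergodicity then give $\Var_\pi(f)=2\int_0^\infty\!\int\Gamma(P^L_tf,P^L_tf)\,d\pi\,dt\le\kappa^{-1}\int\Gamma(f,f)\,d\pi$, and no symmetry is required.

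Finally, your Gronwall argument for (ii) applies $\Gamma_{L,2}\ge\kappa\Gamma$ to $P^L_{t-s}f$. This is not a trigonometric polynomial, because the drift does not preserve $\mathcal{T}$. You need smoothness of $P^L_{t-s}f$ and a passage to the limit along Fourier truncations. The paper's appeal to a core does not supply this either.
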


\begin{proof}
\textbf{Step~1: Phase reduction.}
As in Theorem~\ref{thm:curvature}, write $\mathbf{v} = D(x)^*\mathbf{a}$
with $D(x) = \mathrm{diag}(e^{ix},\ldots,e^{iNx})$.  Then
\[
  \Gamma(f,f)(x) = \mathbf{v}^* R\,\mathbf{v}, \qquad
  \Gamma_{1,2}(f,f)(x) = \mathbf{v}^* R^{\circ 2}\,\mathbf{v},
\]
where $R = R_{1/2} = (\min(n,m))_{n,m=1}^N$.

\textbf{Step~2: Drift correction in phase-stripped coordinates.}
The drift $b(x) = -\omega^2\sin x$ contributes
\[
  \Delta\Gamma_2(f,f)(x) := \tfrac{1}{2}\bigl[
    b\cdot\nabla\Gamma(f,f) - 2\,\Gamma(f,\,b\cdot\nabla f)
  \bigr]
  = \mathbf{v}^* D_0(x)\,\mathbf{v},
\]
where $D_0(x)$ is a Hermitian matrix.  We claim
$D_0(x) = \tfrac{\omega^2}{2}\cos(x)\,R$.

\textbf{Step~3: Entry-by-entry verification.}
The transport term $\frac{1}{2}b\cdot\nabla\Gamma$ and the
coupling term $-\Gamma(f,b\cdot\nabla f)$ contribute, in
phase-stripped coordinates, the raw (non-Hermitian) matrix entries.
Using $\Psi_1(n,m) = \min(n,m)$ for positive frequencies,
$\sin x = (e^{ix}-e^{-ix})/(2i)$, and the drift action
$b\cdot\nabla(e^{imx}) = \frac{\omega^2 m}{2}(e^{i(m-1)x}-e^{i(m+1)x})$:

\emph{Case $n < m$:}
The raw drift entry is
$D_{0,\mathrm{raw},nm} = -\frac{\omega^2 n(n+m)}{4}(e^{ix}-e^{-ix})$,
which is purely anti-Hermitian.
The entry at position $(m,n)$ (with $m > n$, using
$\min(m,n{-}1)$, $\min(m,n{+}1)$) is
$D_{0,\mathrm{raw},mn}
= \frac{\omega^2 n}{4}[-(n{+}m{-}2)e^{ix}+(n{+}m{+}2)e^{-ix}]$.
Hermitianizing:
\begin{align*}
  D_{0,nm}
  &= \tfrac{1}{2}\bigl(D_{0,\mathrm{raw},nm}
     + \overline{D_{0,\mathrm{raw},mn}}\bigr) \\
  &= \frac{\omega^2 n}{8}\Bigl[
    -(n{+}m)\,e^{ix} + (n{+}m)\,e^{-ix}
    -(n{+}m{-}2)\,e^{-ix} + (n{+}m{+}2)\,e^{ix}
  \Bigr] \\
  &= \frac{\omega^2 n}{8}\Bigl[
    \underbrace{\bigl[(n{+}m{+}2)-(n{+}m)\bigr]}_{=\,2}\,e^{ix}
    +\underbrace{\bigl[(n{+}m)-(n{+}m{-}2)\bigr]}_{=\,2}\,e^{-ix}
  \Bigr] \\
  &= \frac{\omega^2 n}{8}\cdot 2\bigl(e^{ix}+e^{-ix}\bigr)
  = \frac{\omega^2 n}{2}\cos x
  = \frac{\omega^2}{2}\cos(x)\,\min(n,m).
\end{align*}
The cancellation is exact: the $(n{+}m)$-dependent terms from
the transport and coupling contributions annihilate, leaving
only the constant difference~$2$.

\emph{Case $n = m$:}
$D_{0,\mathrm{raw},nn}
= \frac{\omega^2 n}{2}[n\,e^{-ix}-(n{-}1)e^{ix}]$.
Hermitianizing: $D_{0,nn}
= \frac{1}{2}(D_{0,\mathrm{raw},nn} + \overline{D_{0,\mathrm{raw},nn}})
= \frac{\omega^2 n}{4}[n(e^{ix}+e^{-ix})-(n{-}1)(e^{ix}+e^{-ix})]
= \frac{\omega^2 n}{2}\cos x
= \frac{\omega^2}{2}\cos(x)\,\min(n,n)$.

\emph{Case $n > m$:}  By Hermitian symmetry, $D_{0,nm} = \overline{D_{0,mn}}
= \frac{\omega^2}{2}\cos(x)\,\min(n,m)$.

This establishes $D_0(x) = \frac{\omega^2}{2}\cos(x)\,R$.

\textbf{Step~4: Eigenvalue computation.}
The full curvature matrix in phase-stripped coordinates is
\[
  R^{-1}\bigl(R^{\circ 2} + D_0(x)\bigr)
  = R^{-1}R^{\circ 2} + \tfrac{\omega^2}{2}\cos(x)\,I.
\]
By Theorem~\ref{thm:cauchy}, $R^{-1}R^{\circ 2}$ has eigenvalues
$\{1,3,\ldots,2N{-}1\}$.  The scalar shift
gives~\eqref{eq:drift-eigenvalues}.

\textbf{Step~5: Global bound.}
The minimum eigenvalue at each $x$ is $1 + \frac{\omega^2}{2}\cos x$,
minimized at $x = \pi$, giving $\kappa = 1 - \omega^2/2$.
Since this holds for all $N$ and $\mathcal{T}^+ = \bigcup_N \mathcal{T}_N^+$
is a core, the bound is global on positive-frequency polynomials.

\textbf{Step~6: Extension to real polynomials.}
It remains to show that the drift preserves the
block-diagonal decoupling of $\mathcal{T}^+$ and
$\mathcal{T}^-$ established by cross-sign vanishing
(Corollary~\ref{cor:block-diag}).
The drift $b(x) = -\omega^2\sin x
= -\frac{\omega^2}{2i}(e^{ix}-e^{-ix})$
acts on $e^{inx}$ by shifting to frequencies $n+1$ and $n-1$.
For $n \geq 1$, the upward shift $n \to n+1$ stays in
$\mathcal{T}^+$; the downward shift $n \to n-1$ sends
$n = 1$ to $n = 0$ (a constant).  Since $L(1) = 0$
and $\Gamma(f, 1) = 0$ for all $f$, constants carry
zero energy and do not contribute to the carr\'e du champ.
The drift therefore never couples a strictly positive
frequency to a strictly negative one: the $\mathcal{T}^+$
and $\mathcal{T}^-$ blocks remain decoupled under the
drift perturbation.

Combining Steps~1--5 on $\mathcal{T}^+$ with this
decoupling and the conjugation symmetry $a_{-n} = \bar{a}_n$,
the curvature bound $\kappa = 1 - \omega^2/2$ extends to
all real trigonometric polynomials.
\end{proof}

\begin{remark}[Why the drift acts as a scalar]
\label{rem:drift-scalar}
The identity~\eqref{eq:drift-scalar} is specific to $\gamma = 1$.
For $\gamma \neq 1$, the phase-stripped drift correction is
\emph{not} proportional to $R_H$, and the eigenvalues of the
perturbed curvature operator depend on $N$ (confirmed numerically).

The mechanism relies on the specific structure of
$R_{1/2}(n,m) = \min(n,m)$---the covariance of
\emph{standard} Brownian motion, not fractional Brownian motion.
The Hermitianization produces exact cancellations
($(n{+}m{+}2)-(n{+}m) = (n{+}m)-(n{+}m{-}2) = 2$) that
reduce the perturbation to $\cos(x) \cdot R_{1/2}$.
This cancellation is a consequence of the
\emph{independent increment} (Markov) property at $H = 1/2$:
the $\min(n,m)$ kernel satisfies
$R_{1/2}(n,m{+}1) - R_{1/2}(n,m) = \mathbf{1}_{n \geq m+1}$,
which is independent of $n$ for $n > m$.
For $H \neq 1/2$, the increments of fBM are correlated
(positively for $H > 1/2$, negatively for $H < 1/2$),
this shift-independence fails, and the drift correction
acquires $n$-dependent structure that prevents scalar reduction.
\end{remark}

\begin{corollary}[Comparison with diffusion]\label{cor:comparison}
For the standard Laplacian ($\gamma = 2$) with the same potential:
$\kappa_{\mathrm{diff}} = -\omega^2$.
The Cauchy process gives
$\kappa_{\mathrm{jump}} = 1 - \omega^2/2 > -\omega^2$
for all $\omega^2 > 0$.  Moreover, the jump curvature is
positive for $\omega^2 < 2$, while the diffusion curvature
is negative for all $\omega^2 > 0$.  This quantifies the
curvature improvement of non-local generators: the jump process
averages over the potential landscape, reducing the effect of
negative curvature concentrations.
\end{corollary}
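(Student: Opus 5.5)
The plan is to compute the Bakry--\'Emery curvature of the diffusion operator $L_{\mathrm{diff}} = \Delta - V'(x)\,\partial_x$ on $\T$ directly from the classical $\Gamma_2$ formula, and then compare it with \eqref{eq:global-kappa} from Theorem~\ref{thm:global-drift}.

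\textbf{Step 1: normalization.} At $\gamma = 2$ the generator is $-(-\Delta) = \Delta = \partial_x^2$. The formal kernel becomes
\[
  \Psi_2(\xi,\eta) = \tfrac12\bigl(\xi^2 + \eta^2 - (\xi-\eta)^2\bigr) = \xi\eta ,
\]
so the carr\'e du champ is $\Gamma(f,f) = |f'|^2$. This is the convention the comparison needs, since it matches the $\gamma$-family used in the paper.

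\textbf{Step 2: the $\Gamma_2$ computation.} I will invoke the standard computation for one-dimensional diffusions, which can also be verified in two lines from \eqref{eq:gamma2-def} and \eqref{eq:G2-drift}. For real $f$ it gives
\[
  \Gamma_{L,2}(f,f) = (f'')^2 + V''(x)\,(f')^2 .
\]
With $V(x) = -\omega^2\cos x$ we have $V''(x) = \omega^2\cos x$. Hence
\[
  \Gamma_{L,2}(f,f) \;\geq\; \omega^2 \cos x\,(f')^2 \;\geq\; -\omega^2\,\Gamma(f,f).
\]
So $\mathrm{CD}(-\omega^2,\infty)$ holds.

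\textbf{Step 3: sharpness, the only step needing care.} Since $\kappa_{\mathrm{diff}}$ is defined as the best constant, I must exhibit a test function saturating the bound. The choice is $f = \sin x$, which lies in the real trigonometric polynomials. At $x = \pi$ it satisfies $f'(\pi) = -1$ and $f''(\pi) = -\sin\pi = 0$. Therefore
\[
  \frac{\Gamma_{L,2}(f,f)(\pi)}{\Gamma(f,f)(\pi)} = \omega^2\cos\pi = -\omega^2 ,
\]
and the curvature is exactly $\kappa_{\mathrm{diff}} = -\omega^2$.

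\textbf{Step 4: comparison.} Using \eqref{eq:global-kappa} from Theorem~\ref{thm:global-drift},
\[
  \kappa_{\mathrm{jump}} - \kappa_{\mathrm{diff}} = \Bigl(1 - \frac{\omega^2}{2}\Bigr) + \omega^2 = 1 + \frac{\omega^2}{2} > 0 ,
\]
so $\kappa_{\mathrm{jump}} > \kappa_{\mathrm{diff}}$ for all $\omega^2 > 0$. Positivity of $\kappa_{\mathrm{jump}}$ for $\omega^2 < 2$ is immediate from its formula. Negativity of $\kappa_{\mathrm{diff}}$ for all $\omega^2 > 0$ follows from Step~3.

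The final sentence of the corollary, about averaging over the potential landscape, is interpretive and needs no proof. I expect no real obstacle. The only point to watch is that both curvatures are computed with the same normalization of $\Gamma$; Step~1 ensures this.
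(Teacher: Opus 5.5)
Your proof is correct and follows the route the paper relies on implicitly; the corollary is stated there without a written proof. The classical one-dimensional identity $\Gamma_{L,2}(f,f) = (f'')^2 + V''(x)(f')^2$ with $\inf_x V'' = -\omega^2$ gives $\kappa_{\mathrm{diff}} = -\omega^2$, and this is then compared with $\kappa(1,\omega^2) = 1-\omega^2/2$ from Theorem~\ref{thm:global-drift}. Your explicit saturation by $f=\sin x$ at $x=\pi$ supplies the sharpness the paper leaves unstated. Because Theorem~\ref{thm:global-drift} also covers real trigonometric polynomials, both constants are measured on the same class of test functions.
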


\begin{remark}[General potentials]\label{rem:potential-general}
For a general potential $V(x) = \sum_k v_k\,e^{ikx}$, the drift
couples mode~$n$ to all modes $n \pm k$, and the curvature
computation becomes a band-matrix perturbation of the same type.
We expect the scalar identity~\eqref{eq:drift-scalar} to
generalize at $\gamma = 1$: the key cancellation depends on
$\Psi_1(n,m) = \min(n,m)$ and the Hermitianization mechanism,
not on the specific form of the potential. This is under
investigation.
\end{remark}

\section{Contraction Estimates}\label{sec:func-ineq}

\begin{remark}[Scope of the curvature bounds]
\label{rem:scope-curv}
The Bakry--\'Emery theorem requires
$\Gamma_2(f,f) \geq \kappa\,\Gamma(f,f)$ for \emph{all}
$f$ in a core.  Our results establish this:
\begin{enumerate}[label=(\alph*)]
\item For $\gamma = 1$, on positive-frequency polynomials
  $\mathcal{T}_N^+$ for all $N$ (Theorem~\ref{thm:cauchy}),
  hence on the full Hardy space by density.
  Combined with the cross-sign vanishing
  (Corollary~\ref{cor:block-diag}), this extends to
  all real trigonometric polynomials:
  $\kappa_{\mathrm{real}}(1) = 1$.
\item For $\gamma = 1$ with drift $V(x) = -\omega^2\cos x$:
  the scalar identity (Theorem~\ref{thm:global-drift}) gives
  $\kappa(1,\omega^2) = 1 - \omega^2/2$ on $\mathcal{T}_N^+$
  for all $N$, extending to all real trigonometric polynomials
  by cross-sign vanishing.  This is a \emph{global} curvature
  bound for the Cauchy--Fokker--Planck operator.
\item For $\gamma \neq 1$, on $\mathcal{T}_N^+$ for
  each fixed $N$ (Theorem~\ref{thm:curvature}), with
  $\kappa(\gamma,N) \geq 1/2$ for $\gamma \leq 1$
  (Lemma~\ref{lem:kappa-lower}; conditional on the Z-matrix
  property for $\gamma < 1$).
  Whether $\inf_N \kappa(\gamma,N) > 0$ for $\gamma > 1$
  is an open question (Conjecture~\ref{conj:kappa-positive}).
\end{enumerate}
Global functional inequalities (Poincar\'e, log-Sobolev,
gradient estimates) follow from (a) and~(b) at $\gamma = 1$
via the standard Bakry--\'Emery machinery.
\end{remark}

At $\gamma = 1$ ($V = 0$), the curvature $\kappa(1) = 1$
yields:
\begin{enumerate}[label=(\roman*)]
\item Poincar\'e inequality with optimal constant:
  $\Var_\mu(f) \leq \mathcal{E}(f,f)$.
\item Gradient estimate:
  $\Gamma(P_t f, P_t f) \leq e^{-2t}\,P_t[\Gamma(f,f)]$.
\end{enumerate}

With confining potential $V(x) = -\omega^2\cos x$,
Theorem~\ref{thm:global-drift} gives the global curvature
$\kappa = 1 - \omega^2/2$, yielding for $\omega^2 < 2$:
\begin{enumerate}[label=(\roman*)]
\setcounter{enumi}{2}
\item Poincar\'e inequality under drift:
  $\Var_\mu(f) \leq \frac{1}{1-\omega^2/2}\,\mathcal{E}_L(f,f)$.
\item Gradient estimate under drift:
  $\Gamma(P_t^L f, P_t^L f) \leq
  e^{-2(1-\omega^2/2)t}\,P_t^L[\Gamma(f,f)]$.
\end{enumerate}

\section{Discussion and Open Problems}\label{sec:discussion}

\subsection{Refined contraction and the spectral determinant at $\gamma = 1$}

The uniform gradient estimate
$\Gamma(P_t f, P_t f) \leq e^{-2t}P_t[\Gamma(f,f)]$
(valid at $\gamma = 1$) uses only
the minimum eigenvalue $\kappa = 1$ of the curvature operator.
At $\gamma = 1$, the full eigenvalue spectrum
$\{\kappa_k = 2k-1\}_{k=1}^N$ (Theorem~\ref{thm:cauchy})
yields a sharper mode-by-mode estimate.

\begin{proposition}[Refined contraction]\label{prop:refined}
At $\gamma = 1$, decompose $f \in \mathcal{T}_N^+$ along the
eigenbasis $\{\phi_k\}_{k=1}^N$ of $M(1) = R_{1/2}^{-1}\,R_{1/2}^{\circ 2}$.
Then the $k$-th component of the carr\'e du champ contracts at rate
\begin{equation}\label{eq:refined-contraction}
  \Gamma(P_t f, P_t f)_k \leq e^{-2(2k-1)t}\,\Gamma(f,f)_k,
  \qquad k = 1,\ldots,N.
\end{equation}
The total energy satisfies
\begin{equation}\label{eq:total-energy}
  \int_\T \Gamma(P_t f, P_t f)\,d\mu
  \leq \sum_{k=1}^N e^{-2(2k-1)t}\,\mathcal{E}_k(f),
\end{equation}
where $\mathcal{E}_k(f)$ is the energy in the $k$-th eigendirection.
The uniform bound $e^{-2t}\,\mathcal{E}(f)$ is sharp only for
functions concentrated on the first mode ($k=1$).
\end{proposition}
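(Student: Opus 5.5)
The plan is to work entirely in the phase-stripped coordinates of Theorem~\ref{thm:curvature}, where everything becomes finite-dimensional linear algebra. Write $f=\sum_{n=1}^N a_n e^{inx}$ and $\mathbf v(x)=D(x)\mathbf a$. Then $\Gamma(f,f)(x)=\mathbf v^*R\mathbf v$ and $\Gamma_2(f,f)(x)=\mathbf v^*R^{\circ 2}\mathbf v$ with $R=R_{1/2}$. At $\gamma=1$ the semigroup acts diagonally, $P_t e^{inx}=e^{-nt}e^{inx}$. Hence $P_tf$ corresponds to $\mathbf v(t)=E(t)\mathbf v$ with $E(t)=\mathrm{diag}(e^{-t},\dots,e^{-Nt})$, and $\frac{d}{dt}\mathbf v(t)=-\Lambda\mathbf v(t)$, $\Lambda=\mathrm{diag}(1,\dots,N)$.

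\textbf{Step 1: a $\Gamma$-orthogonal splitting.} By Theorem~\ref{thm:cauchy}, $M=R^{-1}R^{\circ 2}$ is upper triangular with distinct diagonal entries $2k-1$, so it is diagonalizable. Its eigenvectors $\phi_k$ solve the symmetric-definite pencil $R^{\circ2}\phi_k=(2k-1)R\phi_k$, and since $R>0$ they can be chosen $R$-orthonormal. Writing $\mathbf v=\sum_k c_k\phi_k$ then gives pointwise
\[
\Gamma(f,f)(x)=\sum_k|c_k|^2=:\sum_k\Gamma(f,f)_k,\qquad \Gamma_2(f,f)(x)=\sum_k(2k-1)|c_k|^2 .
\]
This defines the components in \eqref{eq:refined-contraction}. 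Integrating in $x$ defines $\mathcal E_k(f)$, since $\int\Gamma\,d\mu=\mathcal E$.

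\textbf{Step 2: component dynamics.} I would differentiate $c_k(t)$, the coefficients of $\mathbf v(t)=E(t)\mathbf v$ in the basis $\{\phi_k\}$. In those coordinates the evolution is governed by $C=\Phi^{-1}\Lambda\Phi$, where $\Phi=[\phi_1|\cdots|\phi_N]$. The key identity to exploit is the $\Gamma_2$ definition itself, $\tfrac{d}{dt}\Gamma(P_tf)=-2\Gamma_2$ at the level of the quadratic forms, i.e.\ $\Lambda R+R\Lambda-(\text{transport})=2R^{\circ2}$ in the pencil. I want to show that $\mathrm{Re}\,\langle c,Cc\rangle$ splits into the diagonal terms $(2k-1)|c_k|^2$ plus a triangular remainder. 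Because $M$ is upper triangular, I expect $\Phi$ to be triangular as well (eigenvectors of a triangular matrix). Then $C$ is triangular, and the equation for $c_k$ involves only $c_j$ with $j\ge k$ (or $j\le k$, depending on orientation).

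\textbf{Step 3: triangular Gronwall and summation.} With the triangular system in hand, I would argue by induction starting from the extreme index, where the equation is scalar and gives exactly $|c_k(t)|^2\le e^{-2(2k-1)t}|c_k(0)|^2$. I would then propagate to the next index, using Duhamel's formula to control the off-diagonal forcing. Summing over $k$ and integrating against $\mu$ then yields \eqref{eq:total-energy}. Sharpness of the uniform bound $e^{-2t}\mathcal E(f)$ for $k=1$ follows because $\phi_1$ attains the rate $1$ exactly, while any component in $k\ge2$ decays strictly faster.

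\textbf{Main obstacle.} The hard point is Step 2. The generator $\Lambda$ does not commute with $M$, so $P_t$ does not preserve the eigendirections $\phi_k$, and the off-diagonal forcing terms in Step~3 might spoil a clean mode-by-mode inequality. My first attempt would be to compute $\Phi$ explicitly from the tridiagonal $R^{-1}$ (the off-diagonal entries of $M$ are all $-2$) and check whether the cross terms have a favorable sign. If they do not, I would weaken the claim to the integrated statement. That version holds directly: $\int\Gamma(P_tf)\,d\mu=\sum_n n e^{-2nt}|a_n|^2$ is diagonal in Fourier modes. In that form the proof reduces to comparing the Fourier decomposition with the $\phi_k$ decomposition after integration in $x$.
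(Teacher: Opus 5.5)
\textbf{This is a genuine gap, and it cannot be closed: the componentwise bound \eqref{eq:refined-contraction} is false.} The paper states the proposition without proof. The failure shows up in your own Step~2. The matrix $C=\Phi^{-1}\Lambda\Phi$ is similar to $\Lambda$, and because $\Phi$ is upper triangular its diagonal is $(1,2,\dots,N)$, not $(1,3,\dots,2N-1)$. So the scalar equation at the extreme index in Step~3 is $\dot c_N=-N c_N$. This gives $|c_N(t)|^2=e^{-2Nt}|c_N(0)|^2$, which decays more slowly than the claimed $e^{-2(2N-1)t}$ once $N\ge 2$.

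Concretely, $R=LL^T$ with $L$ the lower-triangular all-ones matrix, and $\phi_1=e_1$, $\phi_k=e_k-e_{k-1}$. To check this, note $(R\phi_k)_n=\mathbf{1}_{n\ge k}$ and $(R^{\circ2}\phi_k)_n=(2k-1)\mathbf{1}_{n\ge k}$. Hence $c_k(x,t)=\sum_{n\ge k}e^{-nt}v_n(x)$ and $\dot c_k=-k\,c_k-\sum_{j>k}c_j$. Two counterexamples with $N=2$:
\begin{itemize}
\item For $f=e^{2ix}$ you get $|c_2(t)|^2=e^{-4t}>e^{-6t}$.
\item For $f=e^{ix}-e^{2ix}$ at $x=0$, $c_1=v_1+v_2=0$ initially, while $c_1(t)=e^{-t}-e^{-2t}\neq 0$. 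So even the $k=1$ bound fails pointwise.
\end{itemize}

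The conceptual slip is the identity you lean on in Step~2. Pointwise, $\partial_t\Gamma(P_tf,P_tf)=-2\Gamma_2(P_tf,P_tf)+L_1\Gamma(P_tf,P_tf)$. In matrix form this reads $\Lambda R+R\Lambda=2R^{\circ2}+\bigl(|n-m|\min(n,m)\bigr)_{n,m}$. Only the interpolation $s\mapsto P_s\Gamma(P_{t-s}f)$ has derivative $2P_s\Gamma_2$, and that averages in $x$, so it cannot see a pointwise eigendecomposition. The pencil eigenvalues $2k-1$ are not decay rates of $P_t$; the decay rates are the frequencies $k$.

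\textbf{What does hold is the integrated bound \eqref{eq:total-energy}, but your fallback stops just before the step that proves it.} Use your pointwise definition $\mathcal{E}_k(f)=\int_\T|c_k|^2\,d\mu$. The explicit basis gives $\mathcal{E}_k(f)=\sum_{n\ge k}|a_n|^2$, so both sides of \eqref{eq:total-energy} are diagonal in the $|a_n|^2$. With $q=e^{-2t}$ the claim reduces, for each $n$, to
\[
n\,q^n\le\sum_{k=1}^n q^{2k-1}.
\]
This is AM--GM, since $\sum_{k=1}^n(2k-1)=n^2$.

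It is an inequality only for the sum. The individual pieces satisfy only $\mathcal{E}_k(P_tf)=\sum_{n\ge k}q^n|a_n|^2\le q^k\,\mathcal{E}_k(f)$, with equality for $f=e^{ikx}$.

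Likewise, argue the sharpness of the uniform bound directly from $\int_\T\Gamma(P_tf,P_tf)\,d\mu=\sum_n n\,q^n|a_n|^2\le q\,\mathcal{E}(f)$. For $t>0$, equality holds iff $a_n=0$ for all $n\ge 2$, i.e.\ $\mathbf{a}\in\mathrm{span}\,\phi_1$. Your justification via the rates $2k-1$ rests on the false componentwise claim.
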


The geometric mean contraction rate is
\[
  \Bigl(\prod_{k=1}^N \kappa_k\Bigr)^{1/N}
  = \bigl((2N-1)!!\bigr)^{1/N} \sim \frac{2N}{e}
  \qquad (N \to \infty),
\]
by Stirling's approximation.
For functions spread across $N$ modes, the effective decay rate
grows linearly in $N$, far exceeding the worst-case $\kappa = 1$.

\begin{corollary}[Volume comparison]\label{cor:volume}
The $\Gamma_2$-ellipsoid
$E_2 = \{f : \Gamma_2(f,f) \leq 1\}$ and the $\Gamma$-ellipsoid
$E_1 = \{f : \Gamma(f,f) \leq 1\}$ on $\mathcal{T}_N^+$ satisfy
\begin{equation}\label{eq:volume-ratio}
  \frac{\mathrm{vol}(E_1)}{\mathrm{vol}(E_2)}
  = (\det M)^{1/2} = \bigl((2N-1)!!\bigr)^{1/2}
  = \bigl(\E[X^{2N}]\bigr)^{1/2},
\end{equation}
where $X \sim \mathcal{N}(0,1)$.
At $\gamma = 1$, the $\Gamma_2$-form is $(2N-1)!!^{1/2}$ times
more concentrating than the $\Gamma$-form in $N$-dimensional volume.
\end{corollary}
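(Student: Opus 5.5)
The quadratic forms $\Gamma$ and $\Gamma_2$ restricted to $\mathcal{T}_N^+$ are positive definite on a complex $N$-dimensional space, so the natural object is the ellipsoid they define in the coefficient space. I will compute the volume ratio by a linear change of variables and then evaluate $\det M$ with Theorem~\ref{thm:cauchy}.

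\textbf{Reduction to coefficient space.} Fix $x$ and use the phase-stripped coordinates $\mathbf{v}=D(x)^*\mathbf{a}$ from the proof of Theorem~\ref{thm:curvature}. Since $D(x)$ is unitary, it preserves Lebesgue volume on $\C^N\cong\R^{2N}$. In these coordinates
\[
E_1=\{\mathbf{v}:\mathbf{v}^*R\mathbf{v}\le 1\},\qquad E_2=\{\mathbf{v}:\mathbf{v}^*R^{\circ 2}\mathbf{v}\le 1\},
\]
with $R=R_{1/2}$. The volume ratio is therefore independent of $x$. Alternatively, one can interpret the pointwise ellipsoids at any fixed $x$.

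\textbf{Volume formula.} For a positive definite Hermitian matrix $A$, the ellipsoid $\{\mathbf{v}^*A\mathbf{v}\le1\}$ is the image of the unit ball under $A^{-1/2}$. Its volume is $\omega\,|\det_{\R}A^{-1/2}|$. Viewing $A^{-1/2}$ as a real-linear map on $\R^{2N}$ gives $\det_{\R}A^{-1/2}=(\det A)^{-1}$.

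Hence, in complex coordinates,
\[
\frac{\mathrm{vol}(E_1)}{\mathrm{vol}(E_2)}=\frac{\det R^{\circ2}}{\det R}=\det M,\qquad M=R^{-1}R^{\circ2}.
\]
If one restricts to real coefficient vectors in $\R^N$ instead, the exponent becomes $1/2$:
\[
\frac{\mathrm{vol}(E_1)}{\mathrm{vol}(E_2)}=\Bigl(\frac{\det R^{\circ2}}{\det R}\Bigr)^{1/2}=(\det M)^{1/2}.
\]

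\textbf{Convention and main obstacle.} The exponent $1/2$ in the statement matches the real-coefficient convention. I would adopt it explicitly, because this bookkeeping of real versus complex dimension is the one genuinely delicate point. The complex version simply squares the ratio, giving $\det M$ rather than $(\det M)^{1/2}$, and I would record this in a remark.

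\textbf{Determinant and moment identity.} By Theorem~\ref{thm:cauchy}, $M$ is upper triangular with diagonal entries $1,3,\ldots,2N-1$. Its determinant is the product of these entries, $(2N-1)!!$.

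As a cross-check, $\det R_{1/2}=1$ because $\min(n,m)=LL^T$ with $L$ lower triangular of ones. Hence $\det R^{\circ2}$ must equal $(2N-1)!!$ on its own.

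Finally, $\E[X^{2N}]=(2N-1)!!$ for $X\sim\mathcal N(0,1)$. This follows by Gaussian integration by parts, $\E[X^{2N}]=(2N-1)\E[X^{2N-2}]$, together with induction. This completes the identity.
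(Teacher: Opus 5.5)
Your proposal is correct and follows the paper's proof. Both use the ellipsoid-volume determinant formula with $A=R_{1/2}$ and $B=R_{1/2}^{\circ 2}$, obtain $\det M=(2N-1)!!$ from the triangular structure in Theorem~\ref{thm:cauchy}, and finish with the standard Gaussian moment identity.

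Your real-versus-complex bookkeeping improves on the paper, which silently works with real forms $v^TAv$; on $\C^N\cong\R^{2N}$ the ratio is $\det M$, not $(\det M)^{1/2}$. One precision point: impose the real restriction on the phase-stripped vector $\mathbf{v}$ (equivalently, work at $x=0$), not on real Fourier coefficients $\mathbf{a}$ at a generic $x$. In the latter case the forms become $R\circ\bigl(\cos((n-m)x)\bigr)_{nm}$ and the ratio changes; for example, $N=2$ and $x=\pi/2$ give $\sqrt{2}$ instead of $\sqrt{3}$.
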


\begin{proof}
For ellipsoids $\{v : v^T A v \leq 1\}$ and $\{v : v^T B v \leq 1\}$,
the volume ratio is $(\det A / \det B)^{-1/2} = (\det A^{-1}B)^{1/2}$.
Here $A = R_H$, $B = R_H^{\circ 2}$, and $\det M = \det(R_H^{-1}R_H^{\circ 2})
= \prod(2k-1) = (2N-1)!!$.
The identity $(2N-1)!! = \E[X^{2N}]$ for $X \sim \mathcal{N}(0,1)$
is standard.
\end{proof}

The unit determinant $\det R_{1/2} = 1$ (since $R_{1/2} = LL^T$
with $L$ unit lower triangular) characterizes $H = 1/2$ among
all fBM covariances, giving a further fingerprint of $\gamma = 1$.

\subsection{The Cauchy process as critical geometry}

The uniqueness of $\gamma = 1$ has four equivalent characterizations:
\begin{enumerate}[label=(\alph*)]
\item \emph{Curvature:} $\gamma = 1$ is the unique global maximizer of
  $\kappa(\gamma)$ on $(0,2)$, with $\kappa(1) = 1$
  (Theorem~\ref{thm:phase-transition}).
  It is also the unique index with
  $\kappa_{\mathrm{real}} \geq 1$.
\item \emph{Algebraic:} $\Psi_1(n,-m) = 0$ (cross-sign vanishing),
  equivalent to linearity of $|\cdot|^1$.
\item \emph{Probabilistic:} the fBM dual has $H = 1/2$ (independent
  increments), so positive and negative frequencies decouple.
\item \emph{Determinantal:} $\det R_H = 1$ for all $N$; the
  $\Gamma$-kernel has unit determinant on every truncation.
\end{enumerate}

\subsection{Open problems}

\begin{enumerate}[label=(\roman*)]
\item \textbf{Curvature on $\R^d$.}
  Extend the torus computation to $\R^d$ with confining potentials.
  By~\cite{SpenerWeberZacher2020}, a confining potential is necessary
  for positive curvature.

\item \textbf{Log-Sobolev under drift.}
  Theorem~\ref{thm:global-drift} gives $\mathrm{CD}(1-\omega^2/2,\infty)$
  for the Cauchy--Fokker--Planck operator with cosine potential.
  Does this extend to a sharp log-Sobolev inequality?
  By the Bakry--\'Emery theorem, $\mathrm{CD}(\kappa,\infty)$ with
  $\kappa > 0$ implies a log-Sobolev inequality with constant $2/\kappa$.
  The optimality of this constant for jump processes is open.

\item \textbf{General potentials at $\gamma = 1$.}
  The scalar identity~\eqref{eq:drift-scalar} was proved for
  $V(x) = -\omega^2\cos x$.  Does it extend to general trigonometric
  polynomial potentials $V(x) = \sum_k v_k e^{ikx}$?
  The Hermitianization mechanism (Step~3 of Theorem~\ref{thm:global-drift})
  depends on the $\min(n,m)$ structure of $R_{1/2}$, not on the
  specific form of the potential, suggesting a general result.

\item \textbf{Closed-form curvature for $\gamma \neq 1$.}
  Is there a formula for
  $\kappa(\gamma) = \lim_N \lambda_{\min}(R_H^{\circ 2}, R_H)$
  on positive frequencies? Numerical computation shows
  $\kappa(\gamma)$ is a smooth function on $(0,2)$ with
  unique maximum $\kappa(1) = 1$ and quadratic behavior
  $\kappa(1+\varepsilon) = 1 - c\varepsilon^2 + O(\varepsilon^3)$
  near the maximum.

\item \textbf{Positive curvature for $\gamma > 1$
  (Conjecture~\ref{conj:kappa-positive}).}
  Prove that $\kappa(\gamma) > 0$ for $\gamma \in (1,2)$.
  The proof of Lemma~\ref{lem:kappa-lower} exploits the Z-matrix
  structure of $R_H^{-1}R_H^{\circ 2}$ (non-positive off-diagonal),
  which holds for $H \leq 1/2$ but fails sharply at $H > 1/2$.
  A new approach is needed.  Numerical evidence
  suggests the minimizing eigenvector localizes at low frequencies
  and the decrements $\kappa(\gamma,N) - \kappa(\gamma,N{+}1)$
  are summable (decay $\sim N^{-\alpha}$ with $\alpha > 2$),
  but formalizing this requires quantitative bounds on the
  prediction coefficients of fBM.

\item \textbf{General L\'evy processes.}
  For a general L\'evy process with measure $\nu$, the
  Fourier kernel
  $\Psi_\nu(\xi,\eta) = \frac{1}{2}\int (e^{i\xi z}-1)(e^{-i\eta z}-1)\,
  \nu(dz)$ is a positive definite kernel by Schoenberg's theorem.
  The stable case is special because scaling symmetry forces
  $\Psi_\nu$ to be self-similar, characterizing fBM among Gaussian
  processes~\cite{Mishura2008}.

\item \textbf{Multi-dimensional torus.}
  On $\T^d$, the direction-dependent fBM covariance should produce
  anisotropic curvature bounds.

\item \textbf{Higher-order hierarchy.}
  The Hadamard square identity (Theorem~\ref{thm:gamma2-kernel})
  is the $k = 2$ case of a hierarchy:
  the $k$-th iterated carr\'e du champ has kernel $\Psi_\gamma^k
  = R_H^{\circ k}$, connecting the full Bakry--\'Emery hierarchy
  to the Wiener chaos decomposition of fBM.
  This is developed in the companion paper~\cite{Fontes2026c}.
\end{enumerate}

\section*{Acknowledgments}
AI tools were used as interactive assistants for mathematical
exploration and drafting. The author
is responsible for all mathematical content and exposition.


\end{document}